\documentclass[a4paper]{amsart}
  \usepackage{amsmath,amsthm,amssymb,exscale,bbm,xpatch,mathrsfs,nicefrac,hyperref}
  \usepackage[utf8]{inputenc}

\setlength{\textwidth}{\paperwidth}
\addtolength{\textwidth}{-2in}
\calclayout

% THEOREM Environments (Examples)-----------------------------------------
%
 \newtheorem{thm}{Theorem}[section]
 
 \newtheorem{lemma}[thm]{Lemma}
 
 \theoremstyle{definition}
 
 \newtheorem{remark}[thm]{Remark}
 \newtheorem{remarks}[thm]{Remarks}

 \numberwithin{equation}{section}
% \xpretocmd{\proof}{\setlength{\parindent}{0pt}}{}{}

\newcommand{\Om}{\Omega}
\newcommand{\Borel}{\mathrm{Borel}}

\newcounter{mattner}
\newenvironment{mattnerlist}[1]{\begin{list}{ {\upshape[{#1}\arabic{mattner}]}}{
        \usecounter{mattner}
        \topsep1ex
%        \partopsep
        \parsep0cm
        \itemsep0.5ex
        \leftmargin1.5cm
%        \rightmargin
%        \listparindent
        \labelwidth1cm
        \labelsep0.5cm
        \itemindent0cm
}}
{\end{list}}

\newcommand{\BM}{\mathrm{BM}}

\newcommand{\vanish}[1]{\relax}
\newcommand{\beq}{\begin{equation}}
\newcommand{\eeq}{\end{equation}}

\newcommand{\Ball}{\mathrm{Ball}}

\newcommand{\hs}{\hskip-0.1em}
\newcommand{\set}[1]{\hs\left[\,#1\,\right]}

\newcommand{\prfnoi}{\smallskip\noindent}
\newcommand{\dps}{\displaystyle}

\newcommand{\Ge}{\mathrm{G}}

\newcommand{\gebiet}{O}

\newcommand{\upi}{\pi}

  %\joinrel

\newcommand{\bignorm}[1]{\bigl\| #1 \bigr\|}

\newcommand{\emdf}{\bf}

\newcommand{\suchthat}{\,\,|\,\,}

% Abbreviations of Symbols

\DeclareMathOperator{\wessup}{\text{\upshape ess.sup}}

\newcommand{\N}{\mathbb{N}}

\newcommand{\R}{\mathbb{R}}
\newcommand{\C}{\mathbb{C}}

\newcommand{\ud}{\mathrm{d}}

\newcommand{\ui}{\mathrm{i}}

\newcommand{\Ha}{\mathrm{H}}

% Names of Variables

\newcommand{\vphi}{\varphi}

% Regions in the plane and elsewhere

% Operator and function symbols

\newcommand{\Sum}[2][\relax]{%
 \ifx#1\relax \sideset{}{_{#2}}\sum 
 \else \sideset{}{^{#1}_{#2}}\sum
 \fi}

\newcommand{\car}{\mathbf{1}}

\newcommand{\abs}[1]{\vert #1 \vert}

%\operatornamewithlimits{\Clim}{C--lim}

%\newcommand{\Clim}{\text{C--lim}}

% Symbols of Function and other spaces

\newcommand{\Ell}[1]{\mathrm{L}_{#1}}

%  Operator theory

%
%  Set-theoretic  operations
%

%  Logical Operations

\newcommand{\dann}{\Rightarrow}

\newcommand{\gdw}{\Leftrightarrow}

% \iff is already defined as \Longleftrightarrow

% Mappings

\newcommand{\nach}{\circ}

%
% Topology
%

\newcommand{\cls}[1]{\overline{#1}}

%  (Linear) Algebra

\newcommand{\spann}{\mathrm{span}}

\newcommand{\tensor}{\otimes}

% Functional Analysis
\newcommand{\BL}{\mathcal{L}}

\newcommand{\norm}[2][\relax]{%
   %\ensuremath{\left\Vert{#2}\right\Vert_{#1}}}
   \ifx#1\relax \ensuremath{\lVert#2\rVert}
   \else \ensuremath{\left\Vert#2\right\Vert_{#1}}
   \fi}

\makeatletter
\newcommand{\sprod}[2]{\ensuremath{%
  \setbox0=\hbox{\ensuremath{#2}}
  \dimen@\ht0
  \advance\dimen@ by \dp0
  \left[ #1\rule[-\dp0]{0pt}{\dimen@}, #2\hspace{1pt}\right]}}
\newcommand{\bsprod}[2]{\ensuremath{%
  \setbox0=\hbox{\ensuremath{#2}}
  \dimen@\ht0
  \advance\dimen@ by \dp0
  \bigl[ #1\rule[-\dp0]{0pt}{\dimen@}, #2\hspace{1pt}\bigr]}}

 \makeatother
\newcommand{\dprod}[2]{\ensuremath{\langle#1,#2\rangle}}

% \ifx#1\relax #2\diamond #3 
% \else #2\diamond_{#1} #3
% \fi}

% \ifx#1\relax #2\diamond #3 
% \else #2\diamond_{#1} #3
% \fi}

%{#2\diamond_{#1}#3}  %{\ensuremath{\left<#1,#2\right>}}

%
% Distributionen und Fouriertrafo
%

 %Test functions
 %Schwartz functions
  %polynomially bounded C^\infty-functions

%%%%%%%%%%%%%%%%%%%%%%%%%%%%%%%%

% List-structures

%
%

\newcounter{aufzi}
\newenvironment{aufzi}{\begin{list}{ {\upshape\alph{aufzi})}}{
        \usecounter{aufzi}
        \topsep1ex
%        \partopsep
        \parsep0cm
        \itemsep1ex
        \leftmargin0.8cm
%        \rightmargin
%        \listparindent
        \labelwidth0.5cm
        \labelsep0.3cm
        %\itemindent-0.3cm
}}
{\end{list}}

\newcounter{aufzii}
\newenvironment{aufzii}{\begin{list}{\hfill {\upshape 
(\roman{aufzii})}}{
        \usecounter{aufzii}
        \topsep1ex
%        \partopsep
        \parsep0cm
        \itemsep1ex
        \leftmargin0.8cm
%        \rightmargin
%        \listparindent
        \labelwidth0.5cm
        \labelsep0.3cm
         \itemindent0cm
}}
{\end{list}}

\newcounter{aufziii}

% -----------------------------------------------------------------
      % for \ref's on
    % aufzerates !
 
% ------------------------------------------------------------------

\begin{document}

\title[Vector-Valued Holomorphic Functions and
Fubini-Type Theorems]{Vector-Valued Holomorphic Functions and
Abstract Fubini-Type Theorems}

%----------Author 1
\author[Bernhard H. Haak]{Bernhard H. Haak}
\address{%
  Institut de Math\'ematiques de Bordeaux\\
  Universit\'e de Bordeaux\\
  351, cours de la Lib\'eration\\
  33405 Talence cedex\\
  France}
\email{bernhard.haak@math.u-bordeaux.fr}
%----------Author 2
\author[Markus Haase]{Markus Haase}
\address{%
Mathematisches Seminar der CAU Kiel \\
Heinrich-Hecht-Platz. 6, 24118 Kiel\\
Germany\\
}
\email{haase@math.uni-kiel.de}

%----------thanks, classification, keywords, date
%\thanks{}
\subjclass{%
  %46E50,% Spaces of differentiable or holomorphic functions on infinite-dimensional spaces;
  46G20,% Infinite-dimensional holomorphy  
  46E15,% Banach spaces of continuous, differentiable or analytic functions;
  32A10.% Holomorphic functions of several complex variables
}
\keywords{vector-valued, holomorphic functions of several variables,
  linearization theorem}

\date{\today}
%%% ----------------------------------------------------------------------

\begin{abstract}
Let  $f = f(z,t)$ be a function holomorphic in 
$z \in O \subseteq \C^d$ for fixed $t\in \Om$ 
and measurable in $t$ for fixed $z$ and such that
$z \mapsto f(z,\cdot)$ is bounded with values in
$E := \Ell{p}(\Om)$, $1\le p \le \infty$. It is proved (among other things) that 
\[ \dprod{t\mapsto \vphi( f(\cdot,t))}{\mu}
= \vphi(z \mapsto  \dprod{f(z, \cdot)}{\mu})
\] 
whenever $\mu \in E'$ and $\vphi$ is a bp-continuous
linear functional on $\Ha^\infty(O)$. 
\end{abstract}

%%% ----------------------------------------------------------------------
\maketitle
%%% ----------------------------------------------------------------------
%\tableofcontents

\allowdisplaybreaks

\section{Introduction}\label{intro}

Let $\gebiet$ and $\Om$ be non-empty sets and let
\[ f: \gebiet \times \Omega \to \C,\qquad (z,t) \mapsto  f(z, t)
\]
be a function from which we define two functions,
\[
  F(z) := f(z,\cdot) \in \C^\Om\qquad (z\in \gebiet)\qquad \text{and} \qquad
f_t := f(\cdot, t) \in \C^\gebiet\qquad  (t\in \Om).
\]
Given linear functionals $\mu$ on $\C^\Om$ and $\vphi$ on
$\C^\gebiet$, we might ask whether their application commutes,
i.e. whether one has \beq\label{intro.eq.Fubini} \dprod{t\mapsto
  \vphi( f_t)}{\mu} = \vphi(z \mapsto \dprod{F(z)}{\mu}).  \eeq We
shall call a theorem stating the validity of
\eqref{intro.eq.Fubini} under certain conditions an {\em abstract Fubini-type theorem}, for
obvious reasons.

\smallskip

$\Ell{2}$-valued bounded holomorphic functions play a prominent role
in our work on functional calculus
\cite{HaakHaase:sqfc-arxiv,HaakHaase:sqfc-Asterisque}, and we came
naturally across some particular instances of abstract Fubini-type
theorems in that context.  It is the purpose of this note to present
these results independently of their relevance for functional calculus
theory, because in our view they are interesting in their own right.

\smallskip

The abstract Fubini-type theorems we are aiming at involve holomorphy
in the first and measurability in the second variable.  To wit, we suppose
that $\gebiet \subseteq \C^d$ is open, $\Omega$ is a measure space and
$f: \gebiet \times \Om \to \C$ is such that $f_t$ is holomorphic for
each $t\in \Om$ and $F(z)$ is measurable for each $z\in \gebiet$ (plus
additional hypotheses). Thus, contrary to the classical Fubini
theorem, there is a built-in asymmetry motivated by the aim to
eventually regard $F$ as a bounded holomorphic function with values in
some Banach space $E$ of (equivalence classes of) measurable
functions.

\medskip

The paper is organized as follows. In Section~\ref{s.hol} we recall
classical results for vector-valued holomorphic functions. Next, in
Section~\ref{s.lin}, we state a corollary of Mujica's linearization
theorem from
\cite{Mujica1991}, but with a new proof. In Section~\ref{s.bdd}, we
consider first the case of a closed subspace $E$ of the space $\BM(\Om,
\Sigma)$ of bounded measurable
functions. The main result is then 
Theorem~\ref{int.t.ifHinf}.  It
treats the case $E= \Ell{p}(\Om)$ for $1\le p
< \infty$. It is complemented by Theorem~\ref{int.t.Linf}, which deals
with $E = \Ell{\infty}(\Om)$.
 
Theorem~\ref{int.t.ifHinf} extends
results of Mattner \cite{Mattner2001}, who only treats the case
$d{=}1$ and $ p{=}1$.  In the concluding Section~\ref{s.matt} we
discuss Mattner's theorem and its relation to our work. In particular,
we present an alternative proof of Theorem~\ref{int.t.ifHinf} for the
case $p{=}1$ using Mattner's results as a starting point.

\smallskip 
As it is often the case with papers in functional analysis, the thrust
of the presentation is not just about results (which are only
partially new) but also about proofs. In particular, we strive to keep
the presentation self-contained, at least for readers with a
background in functional analysis.

\subsubsection*{Terminology  and Notation}
Generically, $E, F$ denote complex Banach spaces, and $E', F'$ are their
respective duals; the canonical duality 
$E \times E'\to \C$ is denoted by
$(x, x') \mapsto \dprod{x}{x'}$.

\prfnoi
For any set $\Om$ we write $\ell^\infty(\Om; E)$
for the space of all bounded $E$-valued functions on $\Om$, endowed
with the supremum norm; moreover, we abbreviate $\ell^\infty(\Om) := \ell^\infty(\Om;\C)$.

We say that a sequence $(f_n)_n$ in $\ell^\infty(\Om; E)$
{\em bp-converges} to $f: \Om \to E$ if $f_n \to f$ pointwise on $\Om$
and $\sup_n \norm{f_n}_\infty < \infty$.

A subset $M \subseteq \ell^\infty(\Om;E)$ is called {\em
  bp-closed} if it is closed under taking bp-limits of
sequences in $M$.

Let $F \subseteq \ell^\infty(\Om;E)$ be a bp-closed
subspace.  A linear mapping $\vphi: F \to X$ (where $X$ is any
topological vector space) is called {\em bp-continuous} if
\[  f_n \to f \quad\text{(bp)}\qquad
\dann\quad T f_n \to T f.
\]
We agree that it would be more accurate to speak of ``sequentially
bp-closed''
sets and ``sequentially bp-continuous'' mappings. However, we decided to
drop the word ``sequentially'' for the sake of brevity. 

\prfnoi For any measurable space $(\Om,\Sigma)$ we let
$\BM(\Om) := \BM(\Om,\Sigma)$ be the Banach space of all bounded and
measurable $\C$-valued functions, endowed with the supremum norm. This
is a bp-closed subspace of $\ell^\infty(\Om)$.

\section{Vector-Valued Multivariate Holomorphic Functions}\label{s.hol}

Here and in the following, $\gebiet \subseteq \C^d$ is a fixed open
and not-empty set and $E$ is a complex Banach space.  A function
$f: \gebiet \to E$ is {\emdf holomorphic} if it is totally
differentiable (= Fréchet-differentiable) with $\C$-linear derivative
at each point of $\gebiet$ \cite[Def.~147,~p.68]{HajekJohanisSABS}.
The following theorem is a useful characterization of holomorphy.  It
extends well-known results for multi-variable scalar-valued and
one-variable vector-valued functions.

\begin{thm}\label{hol.t.char}
  Let $E$ be a complex Banach space, and $N \subseteq E'$ an
  $E$-norming subset of $E'$.  For a mapping $f: \gebiet \to E$ the
  following assertions are equivalent:
\begin{aufzii}
\item \label{i.hol.t.char.a} $f$ is holomorphic.
\item \label{i.hol.t.char.b} $f$ is continuous and separately
  holomorphic.
\item \label{i.hol.t.char.c} $f$ is locally bounded and $x'\nach f$ is
  separately holomorphic for all $x'\in N$.
\end{aufzii}
In this case for each $a= (a_j)_{j=1}^d \in O$ and each $r > 0$ with
$\prod_{j=1}^d \Ball[a_j;r] \subseteq O$ one has the {\emdf Cauchy
  formula}
\[
  f(z) = \frac{1}{(2\upi\ui)^d} \int_{\abs{w_d-a_d} = r} \dots
  \int_{\abs{w_1-a_d}=r} \frac{f(w)\, \ud{w_1} \dots \ud{w_d}}{(w_1-
    z_1)\cdots (w_d -z_d)}
\]
for all $z\in \C^d$ with $\abs{z-a}_\infty < r$.
\end{thm}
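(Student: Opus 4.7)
The implications $(i)\Rightarrow(ii)\Rightarrow(iii)$ are essentially immediate: complex Fréchet differentiability yields both continuity and separate holomorphy, continuity forces local boundedness, and every norming set is contained in $E'$. The substance therefore lies in $(iii)\Rightarrow(i)$ together with the Cauchy representation, and my plan has three steps.

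The first step is a one-variable preparatory statement: if $g:U\to E$ with $U\subseteq\C$ open is locally bounded and $x'\circ g$ is holomorphic for every $x'$ in an $E$-norming subset $N$, then $g$ is holomorphic. Fix a closed disk $\overline{\Ball[a;r]}\subseteq U$ on which $\|g\|\le M$. Subtracting two instances of the scalar Cauchy formula for $x'\circ g$ gives the telescoped identity whose modulus is bounded by
\[
|x'(g(z))-x'(g(z_0))|\le \frac{Mr}{\delta^2}\,|z-z_0|\,\|x'\|
\]
uniformly on a smaller disk. Because $N$ is norming, this \emph{quantitative} estimate (linear in $\|x'\|$) upgrades to $\|g(z)-g(z_0)\|\le C|z-z_0|$, making $g$ locally Lipschitz and hence strongly continuous. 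Continuity lets me form the Bochner integral
\[
h(z):=\frac{1}{2\pi \ui}\int_{|w-a|=r}\frac{g(w)}{w-z}\,\ud w,
\]
which, by differentiation under the integral, is holomorphic on $\Ball[a;r]$. The scalar Cauchy formula applied after composing with $x'\in N$ gives $x'\circ h=x'\circ g$, and norming forces $h=g$.

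In the second step I apply this one-variable lemma in each coordinate to conclude that $f$ is separately holomorphic into $E$. For arbitrary $x'\in E'$ the scalar composition $x'\circ f$ is then separately holomorphic and, by the classical (scalar) Hartogs theorem, jointly holomorphic on $O$. In the third step I establish both (i) and the Cauchy formula simultaneously. Fix a closed polydisk $\overline P=\prod_{j=1}^d\overline{\Ball[a_j;r]}\subseteq O$ with distinguished boundary $T$. Telescoping $\prod(w_j-z_j)^{-1}-\prod(w_j-z_{0,j})^{-1}$ into $d$ summands and using the joint scalar Cauchy formula for $x'\circ f$ produces the Lipschitz bound $|x'(f(z))-x'(f(z_0))|\le C\|x'\|\,|z-z_0|_\infty$ on a slightly smaller polydisk, whence $f$ is strongly continuous, in particular on the compact torus $T$. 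Therefore the vector-valued Bochner integral
\[
\tilde f(z):=\frac{1}{(2\pi \ui)^d}\int_T\frac{f(w)}{(w_1-z_1)\cdots(w_d-z_d)}\,\ud w_1\cdots\ud w_d
\]
is defined for $z$ in the open polydisk; differentiation under the integral gives $\tilde f$ jointly holomorphic, while $x'\circ\tilde f=x'\circ f$ for every $x'\in E'$ by the scalar multivariate Cauchy formula, so $f=\tilde f$ by separation. This simultaneously proves (i) and the displayed Cauchy representation.

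The main obstacle is the first step, the norming-subset strengthening of Dunford's theorem (à la Grosse-Erdmann / Arendt–Nikolski). The delicate point is that the scalar Cauchy estimate comes out with the \emph{exact} linear dependence on $\|x'\|$, and it is precisely this dependence that lets norming — weaker than both ``separating'' and ``all of $E'$'' — be sufficient to transfer continuity from the scalar to the vector level; for a merely separating $N$, the estimate on the unit ball of $N$ would no longer be uniform, and the argument would break down.
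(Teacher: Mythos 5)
Your proposal is correct and follows essentially the same strategy as the paper: a quantitative scalar increment estimate, uniform over the norming set $N$, is upgraded to strong (Lipschitz) continuity of $f$, after which the vector-valued Cauchy integral over the distinguished boundary is formed, shown to be holomorphic, and identified with $f$ by testing against functionals from the separating set $N$. The only differences are minor: the paper derives the increment estimate from Schwarz' lemma applied to a telescoping sum rather than from differences of Cauchy kernels, and it dispenses with your intermediate step (separate $E$-valued holomorphy plus the scalar Hartogs theorem) by invoking only iterated one-variable Cauchy formulas, so your route imports a slightly heavier scalar black box while the paper stays closer to first principles.
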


\noindent Here, ``separately holomorphic'' means partially complex
differentiable in each coordinate direction.  For the proof of Theorem
\ref{hol.t.char} we recall the following simple consequence of
Schwarz' lemma.

\begin{lemma}\label{hol.l.Schwarz}
  Let $a\in \C, r > 0$ and $f: \Ball(a;r) \to \C$ holomorphic and
  bounded. Then
  \[
    \abs{f(z) - f(a)} \le \frac{2}{r} \norm{f}_\infty \abs{z-a} \qquad
    (\abs{z-a} < r).
  \]
\end{lemma}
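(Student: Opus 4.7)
The plan is to reduce the estimate to the classical Schwarz lemma by an affine change of variable that normalizes the disc to the unit disc, and a rescaling that forces the target into the unit disc. The factor $2$ on the right-hand side will arise precisely from the triangle inequality $|f(w) - f(a)| \leq 2\|f\|_\infty$, which is the crude bound needed to make the rescaled function map $\D$ into $\D$.

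Concretely, I would set $M := \|f\|_\infty$ (assuming without loss of generality $M > 0$, the case $M = 0$ being trivial) and define
\[
  g : \D \to \C, \qquad g(w) := \frac{f(a + rw) - f(a)}{2M}.
\]
Then $g$ is holomorphic on $\D$, $g(0) = 0$, and for any $w \in \D$ one has $a + rw \in \Ball(a;r)$, hence
\[
  |g(w)| \leq \frac{|f(a+rw)| + |f(a)|}{2M} \leq \frac{2M}{2M} = 1.
\]
Thus $g$ maps $\D$ into the closed unit disc and vanishes at $0$.

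Now the classical Schwarz lemma yields $|g(w)| \leq |w|$ for all $w \in \D$. Substituting $w = (z-a)/r$, which is admissible precisely when $|z-a| < r$, gives
\[
  \frac{|f(z) - f(a)|}{2M} \leq \frac{|z-a|}{r},
\]
which rearranges to the claimed inequality. The only step that requires any care is the verification that $g$ takes values in the closed (rather than open) unit disc, which is sufficient to apply Schwarz's lemma; there is no real obstacle here, as the argument is a standard normalization.
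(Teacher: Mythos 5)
Your proof is correct and follows essentially the same normalization-plus-Schwarz argument as the paper; the only cosmetic difference is that the paper fixes $M > \norm{f}_\infty$ strictly (so the rescaled function maps into the open disc), whereas you take $M = \norm{f}_\infty$ and invoke the closed-disc form of Schwarz's lemma, which is equally valid.
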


\begin{proof}
  Fix $M > \norm{f}_\infty$ and define $\vphi : \Ball(0;1) \to \C$ by
  $\vphi(z) := \frac{1}{2M} (f(a + rz) - f(a))$. Then $\vphi$ is
  holomorphic and bounded by $1$ on $\Ball(0;1)$ with $\vphi(0)=
  0$. By Schwarz' Lemma, $\abs{\vphi(z)} \le \abs{z}$ for
  $\abs{z} < 1$. Replacing $z$ by $\frac{1}{r}(z-a)$ yields the claim.
\end{proof}

\begin{proof}[Proof of Theorem~\ref{hol.t.char}]
  Clearly, \ref{i.hol.t.char.a} implies \ref{i.hol.t.char.b} and
  \ref{i.hol.t.char.b} implies \ref{i.hol.t.char.c}. Now suppose
  \ref{i.hol.t.char.c} and let $D := \prod_j \Ball(a_j;r)$ be any
  polydisc with $\cls{D} \subseteq \gebiet$.  Note that, by
  hypothesis, $f$ is bounded on $\cls{D}$.

  We first show as in \cite[proof~of~1.3]{KaupKaupHFSCV}
  that $f$ is continuous at $a$. Let $z = (z_j)_j \in D$ and write
  \[
    f(z) - f(a) = \sum_{j=1}^d f(z_1, \dots, z_{j}, a_{j{+}1}, \dots,
    a_d) - f(z_1, \dots, z_{j{-}1}, a_{j}, \dots, a_d).
  \]
  Composing with $x'\in N$ and applying Lemma~\ref{hol.l.Schwarz} to
  each summand we obtain
  \[
    \abs{x'(f(z) - f(a))} \le \frac{2}{r} \norm{f}_\infty \sum_{j=1}^d
    \abs{z_j -a_j}.
  \]
  Taking the supremum over $x'\in N$ yields
  \[
    \norm{f(z) - f(a)} \le \frac{2}{r} \norm{f}_\infty \sum_{j=1}^d
    \abs{z_j -a_j}\quad \text{whenever}\quad \abs{z-a}_\infty < r.
  \]
  In particular, $f$ is continuous at $a$. Since $a\in \gebiet$ was
  arbitrary, $f$ is continuous.

  Continuity of $f$ implies that the function
  \[
    g(z) := \frac{1}{(2\upi\ui)^d} \int_{\abs{w_d-a_d} = r} \dots
    \int_{\abs{w_1-a_d}=r} \frac{f(w)\, \ud{w_1} \dots \ud{w_d}}{(w_1-
      z_1)\cdots (w_d -z_d)} \qquad (z\in D)
  \]
is a well-defined $E$-valued holomorphic function on $D$.  Indeed, $g$
is certainly holomorphic in each variable separately and each partial
derivative is continuous. Hence one can apply
\cite[XIII,~Thm.~7.1]{LangRFA}.

Composing with $x'\in N$ yields, by the scalar Cauchy
formula in each variable separately, the identity
\[
  x'(g(z)) = x'(f(z)) \quad (z\in D,\,x'\in N).
\]
Since $N$ is norming (in particular: separating), it follows that
$g=f$ on $D$. Hence, $f$ is holomorphic on $D$.  This implies
\ref{i.hol.t.char.a}.
\end{proof}

\begin{remarks}
  \prfnoi 1)\ For univariate functions, Theorem~\ref{hol.t.char} is
  well-known, see \cite[Appendix~A]{ABHN}.  For scalar-valued
  functions, the equivalence \ref{i.hol.t.char.a} $\gdw$
  \ref{i.hol.t.char.b} is called Osgood's lemma; the (a priori)
  stronger equivalence \ref{i.hol.t.char.a} $\gdw$
  \ref{i.hol.t.char.c} is \cite[Thm.~1.3]{KaupKaupHFSCV}.

  \prfnoi 2)\ Many books start from a different definition of
  holomorphy than ours and do not even mention
  Fréchet-differentiability. A noteworthy exception is
  \cite{HajekJohanisSABS}. See also
  \cite[Theorem~160]{HajekJohanisSABS} for further characterizations
  of holomorphy.

  \prfnoi 3)\ Assertion \ref{i.hol.t.char.c} involving a norming
  subset is due to Grothendieck \cite{Grothendieck1953}, see also
  \cite[p.139]{KatoPTLO}.  It implies (via the Hahn--Banach theorem)
  the following equivalences:
  \begin{aufzi}
  \item A function $f: O \to E$ is holomorphic if and only if it is
    weakly holomorphic (Dunford's theorem, see
    \cite[Thm.~148,~p.68]{HajekJohanisSABS}).
  \item A function $T: O \to \BL(E;F)$ is holomorphic if and only if
    it is strongly holomorphic, if and only if for each $x\in E$ and
    each $x'\in N$ from a norming subset $N \subseteq F'$ the function
    $\dprod{F(\cdot)x}{x'}$ is holomorphic.
  \end{aufzi}

  \prfnoi 4) Theorem~\ref{hol.t.char} still holds if $N$ is merely a
  {\em separating} subset of $E'$. This follows from the analogous
  result for univariate functions, due to Grosse-Erdmann
  \cite{Grosse-Erdmann1992}. Different proofs have been given by
  Arendt and Nikolski \cite[Thm.~3.1]{ArendtNikolski2000} (see also
  \cite{Arendt2016}) and Grosse-Erdmann
  \cite[Thm.~1]{Grosse-Erdmann2004}. For more general results in this
  direction see Frerick, Jord\'{a} and Wengeroth
  \cite{FrerickJordaWengenroth2009}, in particular their Theorem~3.2.

  \prfnoi 5)\ One may replace \ref{i.hol.t.char.b} by the weaker
  assertion
  \begin{aufzi}
  \item[ (ii)'] $f$ is weakly separately holomorphic.
  \end{aufzi}
  This follows from Hartogs' theorem, which says that a scalar
  multi-variable function is already holomorphic if it is merely
  separately holomorphic \cite[Thm.~153,~p.69]{HajekJohanisSABS}.
\end{remarks}

\noindent It follows easily from the Cauchy integral formula that each
partial derivative $\frac{\partial}{\partial z_j} f$ of a holomorphic
function $f: \gebiet \to E$ is again holomorphic.  Iterating this
yields for $\alpha \in \N_0^d$ the holomorphic function
\[  D^\alpha f := \prod_{j=1}^d \frac{\partial^{\alpha_j}}{\partial
    z_j^{\alpha_j}} f,
\]
and one has the well-known Cauchy integral formula for derivatives. 
From there, the following lemma is straightforward.

\begin{lemma}\label{hol.l.lin}
Let $f: O \to E$ holomorphic and $T: E \to F$ bounded and linear.
Then $T \nach f$ is  holomorphic and  $D^\alpha(T \nach f) = T \nach
D^\alpha f$. 
\end{lemma}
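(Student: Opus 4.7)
The plan is to derive both statements directly from the fact that bounded linear operators commute with norm limits (and hence with complex differentiation), avoiding any detour via characterizations.

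First I would verify that $T \nach f$ is holomorphic. At any $a \in \gebiet$, by hypothesis $f$ is Fréchet differentiable with $\C$-linear derivative $f'(a) \in \BL(\C^d;E)$. By the chain rule, $T \nach f$ is Fréchet differentiable at $a$ with derivative $T \nach f'(a) \in \BL(\C^d;F)$, and this derivative is $\C$-linear as the composition of two $\C$-linear maps. By the definition of holomorphy recalled at the beginning of Section~\ref{s.hol}, $T \nach f$ is therefore holomorphic on $\gebiet$.

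Next I would prove the identity $D^\alpha(T \nach f) = T \nach D^\alpha f$ by induction on $|\alpha|$. The case $\alpha = 0$ is trivial. For $\alpha = e_j$ (a unit multi-index), the partial derivative $\frac{\partial}{\partial z_j}(T \nach f)(z)$ is, by definition, a norm limit of difference quotients in $F$; since $T$ is bounded and linear, it commutes with this limit, giving
\[
  \frac{\partial}{\partial z_j}(T \nach f)(z) \;=\; T\Bigl(\frac{\partial f}{\partial z_j}(z)\Bigr).
\]
For general $\alpha$ with $\alpha_j \ge 1$ I would write $D^\alpha = \frac{\partial}{\partial z_j} \nach D^{\alpha - e_j}$ and apply the induction hypothesis followed by the single-variable step just proved; this uses that $D^{\alpha - e_j} f$ is itself holomorphic, which was noted in the paragraph preceding the lemma.

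I do not foresee any genuine obstacle: the only ingredients are the chain rule for Fréchet derivatives and the commutation of a bounded linear operator with a norm-convergent limit, both entirely routine. As an alternative route one could start from the Cauchy integral formula for derivatives implicit in Theorem~\ref{hol.t.char} and pull $T$ inside the vector-valued contour integral, which again uses nothing beyond boundedness and linearity of $T$; this alternative is worth mentioning but is not shorter than the argument above.
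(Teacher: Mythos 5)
Your argument is correct, but it takes a slightly different route from the one the paper intends. The paper offers no written proof: it introduces the Cauchy integral formula for derivatives and then declares the lemma ``straightforward'' from there, i.e.\ the intended argument is to pull the bounded operator $T$ inside the vector-valued contour integral --- exactly the alternative you mention in your last paragraph. Your primary argument instead works directly from the definition of holomorphy in Section~\ref{s.hol} (Fr\'echet differentiability with $\C$-linear derivative): the chain rule gives holomorphy of $T\nach f$ at once, and the identity $D^\alpha(T\nach f)=T\nach D^\alpha f$ follows by commuting $T$ with the norm limits of difference quotients and inducting on $\abs{\alpha}$, using that $D^{\alpha-e_j}f$ is again holomorphic. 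This is, if anything, more elementary than the Cauchy-formula route, since it needs no integral representation at all; the Cauchy-formula route has the minor advantage of giving all derivatives in one stroke without induction. Both are complete and correct, and your induction is set up properly (base case, unit multi-index, reduction via $D^\alpha=\frac{\partial}{\partial z_j}\nach D^{\alpha-e_j}$).
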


\noindent In the case that $f= f(z,t) : \gebiet \times \Om \to \C$
depends on an additional parameter $t\in\Om$, we shall write
\[    D^\alpha_z f(a,t) \mapsto D^\alpha(f(\cdot, t))(a) \qquad
(a,t)\in \gebiet\times \Om.
\]
We write $\Ha^{\infty}(\gebiet; E)$ for the space of all bounded
holomorphic $E$-valued functions on $\gebiet$.
If we endow $\gebiet$ with the Borel $\sigma$-algebra,
$\Ha^{\infty}(\gebiet;E)$ becomes a (sequentially) bp-closed subspace
of $\BM(\gebiet;E)$. From the Cauchy integral formula it follows that
for fixed $a\in \gebiet$ and $\alpha \in \N_0^d$ the mapping
\[  \Ha^\infty(\gebiet;E) \to E,\qquad f \mapsto  D^\alpha f(a)
\]
is bp-continuous.

\section{The Linearization Theorem}\label{s.lin}

The following theorem is a corollary of Mujica's linearization theorem
\cite[Thm.~2.1]{Mujica1991}, however with a different proof (see
Remarks \ref{hol.r.linearization} below).

\begin{thm}[Linearization]\label{hol.t.linearization}
Let $\gebiet\subseteq \C^d$ be open, $E$ a Banach space and $f\in
\Ha^\infty(\gebiet; E)$. Then for each bp-continuous functional 
$\vphi \in \Ha^\infty(\gebiet)'$ there is a unique element $\vphi_f \in
E$ such that 
\[     \dprod{\vphi_f}{x'} = \dprod{x'\nach f}{\vphi}\qquad (x'\in
E').
\]
Moreover, $\vphi_f \in \cls{\spann}f(\gebiet)$.
\end{thm}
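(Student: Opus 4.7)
The plan is to produce a candidate for $\vphi_f$ as an element of the bidual $E''$ and then to prove that it in fact lies in $Y := \cls{\spann}\,f(\gebiet) \subseteq E$. Uniqueness of $\vphi_f$ is immediate since $E'$ separates points of $E$, so I concentrate on existence.

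First I would observe that the map $T: E' \to \Ha^\infty(\gebiet)$, $x' \mapsto x' \nach f$, is well-defined by Lemma~\ref{hol.l.lin} and bounded linear with $\norm{T} \le \norm{f}_\infty$. Composing with $\vphi$ yields a bounded linear functional $\Phi := \vphi \nach T \in E''$. If $x' \in Y^\perp$ then $x' \nach f \equiv 0$, so $\Phi(x') = 0$; hence $\Phi$ annihilates $Y^\perp$ and descends to a bounded linear functional $\widetilde\Phi \in Y^{**}$ via the identification $E'/Y^\perp \cong Y^*$, $[x'] \mapsto x'|_Y$.

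The main step is to upgrade $\widetilde\Phi \in Y^{**}$ to an element of $Y$ (canonically embedded). I would invoke the Krein--Smulian (Banach--Dieudonn\'e) theorem: it is enough to verify that $\widetilde\Phi|_{B_{Y^*}}$ is $\sigma(Y^*,Y)$-continuous. Since $\gebiet$ is separable and $f$ is continuous by Theorem~\ref{hol.t.char}, $f(\gebiet)$ is separable and hence so is $Y$; this makes $B_{Y^*}$ weak-$*$ metrizable, so $w^*$-continuity on $B_{Y^*}$ reduces to sequential $w^*$-continuity. For the latter, given $y^*_n \to y^*$ in $\sigma(Y^*,Y)$ with $\norm{y^*_n} \le 1$, pick Hahn--Banach extensions $x'_n, x' \in B_{E'}$. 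Then $x'_n \nach f \to x' \nach f$ pointwise on $\gebiet$, while $\norm{x'_n \nach f}_\infty \le \norm{f}_\infty$; that is, $x'_n \nach f \to x' \nach f$ in bp. Bp-continuity of $\vphi$ then gives $\widetilde\Phi(y^*_n) = \Phi(x'_n) \to \Phi(x') = \widetilde\Phi(y^*)$, as required. The element $\vphi_f \in Y$ representing $\widetilde\Phi$ then satisfies
\[ \dprod{\vphi_f}{x'} = \widetilde\Phi(x'|_Y) = \Phi(x') = \dprod{x'\nach f}{\vphi} \qquad (x' \in E'). \]

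The real obstacle is exactly the passage from $\Phi \in E''$ to $\vphi_f \in E$: \emph{a priori} nothing forces this bidual element to be weak-$*$ continuous on $E'$. Two ingredients are essential and dovetail nicely: separability of $Y$, available for free from continuity of holomorphic maps, makes sequential $w^*$-continuity on $B_{Y^*}$ sufficient, while the observation that norm-bounded $w^*$-convergent sequences in $E'$ are transported by $T$ to bp-convergent sequences in $\Ha^\infty(\gebiet)$ explains why bp-continuity — and not mere norm-continuity — is the correct hypothesis on $\vphi$.
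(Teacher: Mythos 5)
Your proposal is correct and follows essentially the same route as the paper's proof: both construct the candidate as a functional on the dual, use separability of $\cls{\spann}f(\gebiet)$ (from continuity of $f$) to make the dual unit ball weak$^*$-metrizable, deduce sequential weak$^*$-continuity there from bp-continuity of $\vphi$ via the uniform bound $\norm{x'\nach f}_\infty \le \norm{f}_\infty$, and then conclude membership in the predual. The only cosmetic differences are that the paper assumes $E = \cls{\spann}f(\gebiet)$ without loss of generality instead of passing through the quotient $E'/Y^\perp$, and cites Banach's theorem (Str\u{a}til\u{a}--Zsid\'o, Lemma~1.2) where you invoke Krein--Smulian, which the paper deliberately avoids as the heavier tool.
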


\begin{proof}
Uniqueness and the second assertion follow from the Hahn--Banach
theorem.  For existence we may suppose without loss of generality that 
$E =\cls{\spann}f(\gebiet)$. As $f$ is continuous and $\gebiet$ is
separable, so is $E$.   Define the
  linear functional $\vphi_f: E'\to \C$ by
  \[
    \vphi_f(x') := \vphi( z \mapsto (x'\nach f)(z)) \qquad ( x' \in E').
\]
We need to prove that  $\vphi_f\in E$ under the natural embedding
$E\hookrightarrow E''$. Since $E$ is
separable, the weak$^*$-topology on the unit ball of $E'$ is
metrizable.  Since $\vphi$ is bp-continuous, $\vphi_f$ is weakly$^*$
sequentially continuous on the unit ball of $E'$, and hence weakly$^*$
continuous there. Then, by a  well-known theorem of Banach (see
\cite[Lemma~1.2]{Stratila-Zsido} for an elegant proof), it follows
that $\vphi_f\in E$ as claimed.
\end{proof}

\begin{remarks}\label{hol.r.linearization}
1) As it stands, Theorem~\ref{hol.t.linearization} is 
a consequence of Mujica's linearization theorem \cite[Theorem
2.1]{Mujica1991}. There, Theorem~\ref{hol.t.linearization} is
stated for open $\gebiet \subseteq F$, where $F$ is any 
Banach space, and $\vphi \in \Ge^\infty(\gebiet)$. Here,
$\Ge^\infty(\gebiet)$ is the space of all linear
functionals which on bounded
subsets of $\Ha^\infty(\gebiet)$ are
continuous with respect to $\tau_c$,  
the topology of uniform convergence on compacts. 

Now, since any open set $\gebiet \subseteq \C^d$
is locally compact and $\sigma$-compact, the topology $\tau_c$ is metrizable, and hence
on bounded subsets of $\Ha^\infty(\gebiet)$,
bp-continuity and $\tau_c$-continuity of a functional coincide. 

\prfnoi 2) Theorem~\ref{hol.t.linearization} remains true (with essentially the same proof) under the
following weaker hypotheses: $F$ is any
Banach space,  $\gebiet \subseteq F$ open, and $f\in
\Ha^\infty(\gebiet; E)$ is separably-valued.

Under these hypotheses, a bp-continuous functional may not be 
$\tau_c$-continuous on bounded sets, and hence the result may then not be
covered by Mujica's theorem. (Unfortunately, we do not know of an
example showing that this is indeed the case.)

\prfnoi 3)\ Our proof of Theorem~\ref{hol.t.linearization}  is more
elementary than Mujica's from \cite{Mujica1991}, as it avoids the
bipolar theorem and is built just on Banach's theorem (which has a
much more elementary proof). 
 Moreover, a slight modification of our argument 
also works in the more general setting of Mujica's theorem: Namely, if 
$(x_\alpha')_\alpha$ is a norm-bounded net and weakly$^*$-convergent to
$x'\in E'$, then  the net $(x_\alpha'\nach f)_\alpha$ is
$\tau_c$-convergent to $x'\nach f$, by equicontinuity.

Using this argument leads to a simplification of the proof of Mujica's theorem.
However,  it should be noted that the linearization result 
is only one (although essential) part of Mujica's
original theorem from \cite{Mujica1991}. 
\end{remarks}

The proof of Theorem
\ref{hol.t.linearization} is primarily functional-analytic.
It may be interesting to see that one can alternatively employ
concepts from measure theory. Here, we suppose
in addition that $\vphi$ is integration with respect to a
measure. (Cf., however, Remark~\ref{rem:cooper} below.)

\begin{proof}[Second Proof of Theorem~\ref{hol.t.linearization}]
  Let the functional $\vphi: \Ha^\infty(O) \to \C$ be given by
  integration against a complex Borel measure $\nu$. 
Since $f$ is holomorphic, it is 
  weakly holomorphic and has values in a separable subspace.  Hence,
  by Pettis' measurability theorem, $f$ is strongly
  $\abs{\nu}$-measurable. Since $f$ is bounded, it is Bochner
  integrable with respect to $\abs{\nu}$, and hence the Bochner integral
  \[
    h := \int_\gebiet f \, \ud{\nu} \in E
  \]
exists. Applying $x' \in E'$ yields
  \[
    \dprod{h}{x'} = \int_{\gebiet} \dprod{f(\cdot)}{x'} \, \ud{\nu} =
    \vphi( z \mapsto \dprod{f(z)}{x'}).\qedhere
  \]
\end{proof}

\begin{remark}\label{rem:cooper}
  The assumption that the bp-continuous functional $\vphi\in \Ha^\infty(\gebiet)'$ is integration with respect to a measure
  is only virtually restrictive. Indeed, it turns out that each
  bp-continuous functional on $\Ha^\infty(\gebiet)$ is already given
  by integration against some finite measure, and even one that has a
  density with respect to Lebesgue measure. This follows from
  identifying the space of bp-continuous functionals on
  $\Ha^\infty(\gebiet)$ with the dual of $\Ha^\infty(\gebiet)$ with
  respect to the so-called {\em mixed topology}, see
  \cite[Chap.~V.1]{CooperSSAATFA}, in particular part~5) of
  Proposition~1.1 and Proposition~1.2. (Actually, Cooper \cite
  {CooperSSAATFA} only treats the case $d=1$, but we expect the
  same result for $d >1$.)
\end{remark}

\section{Holomorphic Families of Measurable and Integrable
  Functions}\label{s.bdd}\label{s.int}

The Linearization Theorem~\ref{hol.t.linearization} takes the form of an abstract
Fubini-type theorem if $E$ is a space of functions on a set $\Omega$.
We exploit this idea first for Banach spaces of bounded functions.
Note that every space $E= \BM(\Om, \Sigma)$ for some $\sigma$-algebra $\Sigma$ on a set $\Om$ is a closed subspace of
$\ell^\infty(\Om)$.

\begin{thm}\label{bdd.t.mfHinf}
  Let $\Om$ be a set and $E$ a closed subspace of $\ell^\infty(\Om)$,
  let $\gebiet \subseteq \C^d$ be open and
  $f : \gebiet \times \Omega \to \C$ a bounded function with the
  following properties:
\begin{itemize}
\item  $F(z) = f(z, \cdot) \in E$ for each $z\in \gebiet$ and
\item  $f_t = f(\cdot, t)$ is holomorphic for each $t\in \Omega$.
\end{itemize}
Finally, let  $\vphi: \Ha^\infty(\gebiet) \to \C$ be a bp-continuous
functional. Then the following
assertions hold:
\begin{aufzi}
\item \label{i.bdd.t.mfHinf.a} $F \in \Ha^\infty(\gebiet; E)$.

\item \label{i.bdd.t.mfHinf.b} For each multi-index $\alpha \in
\N_0^d$ and each $a\in \gebiet$ one has
\[   (D^\alpha F)(a) =   t \mapsto D_z^\alpha f(z,t)\Big|_{z=a}  \in E
\]
and, for all $\mu \in E'$,\quad 
$\displaystyle    D^\alpha_z \dprod{ t\mapsto f(z,t)}{\mu}\Big|_{z=a} =     \dprod{t\mapsto
  D_z^\alpha f(z,t)\Big|_{z=a} }{\mu}$.

\item \label{i.bdd.t.mfHinf.c} The function
$t\mapsto \vphi( f_t)$ is  contained in 
$\cls{\spann}( F(\gebiet)) \subseteq E$.

\item \label{i.bdd.t.mfHinf.d} For all $\mu \in E'$: \quad 
$\dprod{t\mapsto \vphi( f_t)}{\mu}
= \vphi(z \mapsto  \dprod{F(z)}{\mu})$.
\end{aufzi} 
\end{thm}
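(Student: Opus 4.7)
My plan is to reduce everything to the Linearization Theorem~\ref{hol.t.linearization} by using the point evaluations $\delta_t : E \to \C$, $g \mapsto g(t)$, for $t \in \Om$. Since $E$ sits isometrically in $\ell^\infty(\Om)$, the family $N := \{\delta_t \suchthat t \in \Om\} \subseteq E'$ is clearly norming for $E$.

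\textbf{Step 1 (part \ref{i.bdd.t.mfHinf.a}).} The function $F : \gebiet \to E$ is bounded because $\sup_{z\in \gebiet} \norm{F(z)}_E = \sup_{z\in \gebiet} \sup_{t\in \Om} \abs{f(z,t)} = \norm{f}_\infty < \infty$. For each $t\in \Om$, $\delta_t \nach F = f_t$ is holomorphic by hypothesis. Hence Theorem~\ref{hol.t.char}\,\ref{i.hol.t.char.c} applied with the norming set $N$ yields $F \in \Ha^\infty(\gebiet;E)$.

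\textbf{Step 2 (part \ref{i.bdd.t.mfHinf.b}).} By Lemma~\ref{hol.l.lin} applied to the bounded linear functional $\delta_t : E\to \C$ for each $t \in \Om$,
\[
(D^\alpha F(a))(t) = \delta_t(D^\alpha F(a)) = D^\alpha(\delta_t \nach F)(a) = D^\alpha_z f(z,t)\big|_{z=a},
\]
so the pointwise-defined function on the right indeed equals $D^\alpha F(a) \in E$. Applying Lemma~\ref{hol.l.lin} once more, this time to $\mu \in E'$, gives
\[
D^\alpha_z \dprod{f(z,\cdot)}{\mu}\big|_{z=a}
= D^\alpha(\mu \nach F)(a)
= \mu(D^\alpha F(a))
= \bigdprod{t \mapsto D^\alpha_z f(z,t)\big|_{z=a}}{\mu}.
\]

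\textbf{Step 3 (parts \ref{i.bdd.t.mfHinf.c} and \ref{i.bdd.t.mfHinf.d}).} Apply Theorem~\ref{hol.t.linearization} to $F \in \Ha^\infty(\gebiet;E)$ and the bp-continuous functional $\vphi$, obtaining an element $\vphi_F \in \cls{\spann}\, F(\gebiet) \subseteq E$ with
\[
\dprod{\vphi_F}{\mu} = \dprod{\mu \nach F}{\vphi} = \vphi(z \mapsto \dprod{F(z)}{\mu}) \qquad (\mu \in E').
\]
To identify $\vphi_F$, I specialize to $\mu = \delta_t$ for $t\in \Om$:
\[
\vphi_F(t) = \dprod{\vphi_F}{\delta_t} = \vphi(z \mapsto f(z,t)) = \vphi(f_t).
\]
Thus $\vphi_F = (t \mapsto \vphi(f_t))$, which is exactly the statement of \ref{i.bdd.t.mfHinf.c}; plugging this back into the previous identity yields \ref{i.bdd.t.mfHinf.d}.

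The argument is essentially mechanical once one realizes that the whole content is already packed into Theorem~\ref{hol.t.linearization}; there is no real obstacle. The only point that deserves care is the switch of viewpoints in Step~1, namely that separate holomorphy of $f$ in $z$ for each fixed $t$ upgrades, via the norming family of point evaluations in Theorem~\ref{hol.t.char}\,\ref{i.hol.t.char.c}, to holomorphy of the $E$-valued function $F$; this is what unlocks the linearization machinery.
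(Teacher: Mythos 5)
Your proposal is correct and follows essentially the same route as the paper's own proof: the norming family of point evaluations $\delta_t$ combined with Theorem~\ref{hol.t.char} for holomorphy of $F$, Lemma~\ref{hol.l.lin} for the derivative identities, and Theorem~\ref{hol.t.linearization} with the specialization $\mu=\delta_t$ to identify $\vphi_F$ as $t\mapsto\vphi(f_t)$. No gaps.
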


\noindent Note that the second part of \ref{i.bdd.t.mfHinf.b} is of the type ``differentiation under
the integral'' when one is inclined to interpret the application of
$\mu$ as some kind of integral.

\begin{proof}[Proof of Theorem~\ref{bdd.t.mfHinf}]
  Observe that the set of Dirac (= point evaluation) functionals
  $\{ \delta_t \suchthat t\in \Omega\}$ is norming for $E$. Since $F$
  is bounded and (by hypothesis) the functions
  \[
    z \mapsto \dprod{F(z)}{\delta_t} = f(z,t) \qquad(t\in \Omega)
\]
are all holomorphic, $F$ is holomorphic by Theorem~\ref{hol.t.char}.
It follows that $D^\alpha F$ takes values in $E$ and
\[
  \dprod{(D^\alpha F)(z)}{\mu}  = D_z^\alpha \dprod{F(z)}{\mu}
\]
as scalar functions on $\gebiet$ for each $\mu \in E'$ (Lemma
\ref{hol.l.lin}).  Specializing $\mu = \delta_t$ for $t\in \Om$ yields
$(D^\alpha F)(z)(t) = D^\alpha_z f(z, t)$, and hence
\ref{i.bdd.t.mfHinf.b} is proved.

Next, we apply Theorem~\ref{hol.t.linearization} to the mapping $F\in
\Ha^\infty(\gebiet; E)$ and obtain, for any bp-continuous functional
$\vphi \in \Ha^\infty(\gebiet)'$ an element $\vphi_F\in 
\cls{\spann}(F(\gebiet))$ with 
\[   \dprod{\vphi_F}{\mu} = \vphi( z \mapsto \dprod{F(z)}{\mu}) 
\qquad (\mu \in E').
\]
By specializing $\mu = \delta_t$ for $t\in \Omega$, we find
$\vphi_F = (t \mapsto \vphi(f_t))$. This concludes the proof of
\ref{i.bdd.t.mfHinf.c} and \ref{i.bdd.t.mfHinf.d}.
\end{proof}

\noindent
Next, we replace the {\em set} $\Omega$ by a {\em
  measure space} $(\Om,\Sigma,\mu)$.  The corresponding
$\Ell{p}$-space is denoted by $\Ell{p}(\Omega)$, and $p,q$ are always
dual exponents, i.e., $1\le p,q \le \infty$ with
$\frac{1}{p} + \frac{1}{q} = 1$.

Before we turn to the main results, let us fix some 
auxiliary information.

\begin{lemma}\label{int.l.meas}
Let $(\Omega, \Sigma)$ be a measurable space,  $\gebiet \subseteq 
\C^d$ an open subset of $\C^d$ and
\[  f: \gebiet \times \Omega \to \C
\]
a function with the following properties:
\begin{itemize}
\item $F(z) := f(z, \cdot)$ is measurable for each $z\in \gebiet$ and \smallskip
\item $f_t := f(\cdot, t)$ is bounded and holomorphic for each $t\in
  \Omega$.\smallskip
\end{itemize}
Then the following assertions hold.
\begin{aufzi}
\item For each $\alpha \in \N_0^d$ and $a\in \gebiet$, the function
$D^\alpha_z f(a, \cdot)$ is measurable.
\item The function $t \mapsto \norm{f_t}_\infty = \sup_{z\in \gebiet}
  \abs{f(z, t)}$ is measurable.

\item For each bp-continuous functional $\vphi : \Ha^\infty(\gebiet)
  \to \C$ the function $t \mapsto \vphi(f_t)$ is measurable.  
\end{aufzi}
\end{lemma}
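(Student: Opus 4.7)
The plan is to establish the three parts in order: (a) via the Cauchy integral formula, (b) via separability of $\gebiet$, and (c) via a truncation argument that reduces the general case to the globally bounded situation covered by Theorem~\ref{bdd.t.mfHinf}. Parts (a) and (b) are essentially routine; the conceptual work is in (c), where I need to convert pointwise boundedness of each $f_t$ into uniform boundedness on a countable partition of $\Om$.

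For (a), fix a closed polydisc $\prod_j \cls{\Ball(a_j; r_j)} \subseteq \gebiet$ around $a$ and write, by the Cauchy formula for derivatives, $D_z^\alpha f(a, t)$ as an iterated contour integral of $f(w, t)/\prod_j (w_j - a_j)^{\alpha_j + 1}$ over the distinguished boundary. Parameterising each circle by an angle in $[0, 2\pi]$, the integrand becomes jointly continuous in $\theta \in [0,2\pi]^d$ for each fixed $t$ (since $f_t$ is holomorphic, hence continuous on the compact contour) and measurable in $t$ for each fixed $\theta$ (by hypothesis on $F(z)$). For each $t$ the integral is therefore the limit of standard Riemann sums over uniform partitions of $[0, 2\pi]^d$, and each such Riemann sum is a finite $\C$-linear combination of values $f(w_k, t)$ at fixed sample points $w_k$, hence measurable in $t$. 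Thus $D_z^\alpha f(a, \cdot)$ is a pointwise limit of measurable functions, and so measurable. For (b), each $f_t$ is continuous on $\gebiet$, and $\gebiet \subseteq \C^d$ is separable; fixing any countable dense subset $D_0 \subseteq \gebiet$ gives $\norm{f_t}_\infty = \sup_{z \in D_0} \abs{f(z, t)}$, a countable supremum of measurable functions.

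For (c) the obstruction is that $f$ need not be uniformly bounded on $\gebiet \times \Om$, so Theorem~\ref{bdd.t.mfHinf} does not apply directly. Using (b), however, the sets
\[
  \Om_n \defeq \{ t \in \Om \suchthat \norm{f_t}_\infty \le n \} \qquad (n \in \N)
\]
are measurable and their union is all of $\Om$, since each $f_t$ is bounded. On $\gebiet \times \Om_n$ the restriction of $f$ is bounded by $n$ and retains the other hypotheses, so applying Theorem~\ref{bdd.t.mfHinf} with $E \defeq \BM(\Om_n)$ (equipped with the trace $\sigma$-algebra), a closed subspace of $\ell^\infty(\Om_n)$, yields via part~(c) of that theorem that $t \mapsto \vphi(f_t)$ belongs to $\BM(\Om_n)$, i.e.\ is measurable on $\Om_n$. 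Countability of the cover then gives measurability on all of $\Om$. The main obstacle is thus precisely this reduction step: without the measurable truncation supplied by (b), there would be no way to decompose $\Om$ into pieces on which $f$ is uniformly bounded, and Theorem~\ref{bdd.t.mfHinf} could not be brought to bear.
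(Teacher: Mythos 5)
Your proof is correct, but it diverges from the paper's in two of the three parts. For (a), the paper simply observes that a partial derivative is a pointwise limit of difference quotients (each measurable in $t$ since $F(z)$ is measurable for every fixed $z$) and then inducts on $\abs{\alpha}$; your Cauchy-formula-plus-Riemann-sums argument reaches the same conclusion but has to justify convergence of the Riemann sums for the iterated contour integral, which the difference-quotient route avoids. Part (b) is identical in both. For (c), where the real content lies, both arguments hinge on the measurability of $c(t) := \norm{f_t}_\infty$ from (b), but they use it differently: you slice $\Om$ into the level sets $\Om_n = \{c \le n\}$ and apply Theorem~\ref{bdd.t.mfHinf} once per slice with $E = \BM(\Om_n)$, recovering global measurability from the countable measurable cover; the paper instead normalizes, setting $m(z,t) := f(z,t)/(1+c(t))$, which is globally bounded by $1$, applies Theorem~\ref{bdd.t.mfHinf} a single time with $E = \BM(\Om)$, and then recovers $\vphi(f_t) = (1+c(t))\,\vphi(m_t)$ by homogeneity of $\vphi$. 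Your decomposition is a perfectly sound alternative (all the hypotheses of Theorem~\ref{bdd.t.mfHinf} are indeed satisfied on each $\gebiet \times \Om_n$, and measurability on each piece of a countable measurable cover does give measurability on $\Om$); the paper's normalization trick is marginally slicker in that it needs only one invocation of the theorem and no patching step, but neither approach is more general than the other.
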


\begin{proof}
a)\  By induction, it suffices to prove the statement for the case
$\abs{\alpha} = 1$. As a partial derivative is a limit of a sequence of
difference quotients, the claim follows. 

\prfnoi
b)\ Note that, by hypothesis, $f_t \in \Ha^\infty(\gebiet)$, i.e., 
$\norm{f_t}_\infty < \infty$ for each $t\in  \Omega$. Let
$D\subseteq \gebiet$ be a countable dense set. Then, since each $f_t$
is continuous, 
\[  c(t) := \norm{f_t}_\infty = \sup_{z\in \gebiet} \abs{f(z,t)} = 
\sup_{z\in D} \abs{f(z,t)}\qquad (t\in \Om).
\]
Hence $c: \Omega \to \R_+$ is a pointwise supremum of countably many
measurable
functions, and hence measurable.

\prfnoi
c)\  Define
\[   m(z, t) := \frac{f(z,t)}{1 + c(t)} \qquad (z\in \gebiet,\, t\in
\Omega).
\]
Then $\abs{m(z,t)} \le 1$ for all $(z,t)\in \gebiet\times \Omega$. 
Hence, $m$ meets the conditions of Theorem~\ref{bdd.t.mfHinf} with $E= \BM(\Omega,
\Sigma)$,
the space of bounded $\Sigma$-measurable functions on $\Omega$. Given
a bp-continuous functional $\vphi$ on $\Ha^\infty(\gebiet)$ it follows that 
\[ t\mapsto \vphi(m_t) = \frac{\vphi( f_t)}{1 + c(t)}
\]
is measurable. As $1 + c$ is measurable, so is
$t \mapsto \vphi(f_t)$. 
\end{proof}

The next is our main theorem. Observe that assertion
\ref{i.int.t.ifHinf.d} is an abstract Fubini-type result as in
\eqref{intro.eq.Fubini}.

\begin{thm}\label{int.t.ifHinf}
Let $(\Omega, \Sigma, \mu)$ be a measure space, $1\le p < \infty$,  $\gebiet \subseteq 
\C^d$ an open subset of $\C^d$ and
\[  f: \gebiet \times \Omega \to \C
\]
a function with the following properties:
\begin{itemize}
\item $\forall z \in \gebiet: \; F(z) := f(z, \cdot)$ is measurable 
  and   $\displaystyle \sup_{z\in \gebiet} \int_\Omega \abs{ f(z,t)}^p\,
  \mu(\ud{t}) <
  \infty;$\smallskip
\item $\forall t \in \Omega: \; f_t := f(\cdot, t)$ is bounded and holomorphic.\smallskip
\end{itemize}
Fix $\alpha \in \N_0^d$, $a\in \gebiet$,  and a bp-continuous linear functional $\vphi:
\Ha^\infty(\gebiet)\to \C$. 
 Then the following
assertions hold:
\begin{aufzi}
\item \label{i.int.t.ifHinf.a}
  $F \in \Ha^\infty(\gebiet; \Ell{p}(\Omega))$
and  
  $D^\alpha F(a) = D^\alpha_z f(a,\cdot)$ $\mu$-almost everywhere.
\item \label{i.int.t.ifHinf.b} For each $h\in \Ell{q}(\Om)$ the
  function $\dps z \mapsto \int_\Om f(z,t)h(t)\, \mu(\ud{t})$ is
  holomorphic and
  \[
    D^\alpha \Bigl( z \mapsto \int_\Om f(z,t)h(t)\,
    \mu(\ud{t})\Bigr) =  z\mapsto \int_\Om D^\alpha_z f(z,t) h(t)\,
    \mu(\ud{t}).
\]
\item \label{i.int.t.ifHinf.c} The measurable function
  $(t\mapsto \vphi( f_t)) : \Omega \to \C$ is
  $p$-integrable with
  \[
    \Bigl(\int_\Om \abs{\vphi(f_t)}^p\, \mu(\ud{t})\Bigr)^\frac{1}{p}
    \le \norm{\vphi} \sup_{z\in O} \norm{F(z)}_p,
  \]
  and it is contained in the subspace
  $\cls{\spann}F(O) \subseteq \Ell{p}(\Om)$.
\item \label{i.int.t.ifHinf.d}
  $\displaystyle \int_\Omega\vphi( f_t) h(t) \, \mu(\ud{t}) =
  \vphi\Bigl(z \mapsto \int_\Omega f(z,t)h(t)\, \mu(\ud{t})\Bigr)$ for
  all $h\in \Ell{q}(\Om)$.
\end{aufzi}  
\end{thm}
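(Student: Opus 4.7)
The plan is to deduce (a) and (b) by combining the Cauchy integral formula with Fubini, and then to deduce (c) and (d) from the already-established bounded case (Theorem~\ref{bdd.t.mfHinf}) via the normalisation $g := f/(1+c)$ from the proof of Lemma~\ref{int.l.meas}, followed by a truncation argument. For (a), observe that $F\colon \gebiet \to \Ell{p}(\Om)$ is bounded by $M := \sup_z \norm{F(z)}_p < \infty$, so by Theorem~\ref{hol.t.char}(iii) applied to the norming set $N := \{h \in \Ell{q}(\Om) \suchthat \norm{h}_q \le 1\} \subseteq \Ell{p}(\Om)'$ it suffices to show that $z \mapsto \int_\Om f(z,t) h(t)\,\mu(\ud{t})$ is holomorphic for each $h \in N$. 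Fixing a closed polydisc $\prod_j \cls{\Ball(a_j;r)} \subseteq \gebiet$ and applying Cauchy's formula to each $f_t$, I would invoke Fubini (its hypothesis follows from H\"older, $\int_\Om \abs{f(w,t) h(t)}\,\mu(\ud{t}) \le M\norm{h}_q$, together with the uniform lower bound on $\abs{w_j - z_j}$ along the distinguished boundary) to rewrite $\int_\Om f(z,t) h(t)\,\mu(\ud{t})$ as a Cauchy-type integral in $z$, manifestly holomorphic. The derivative identities in (a) and (b) follow from the same Fubini argument applied to Cauchy's formula for derivatives; Minkowski's integral inequality on that representation yields $D^\alpha_z f(a,\cdot) \in \Ell{p}(\Om)$, and $\Ell{p}$--$\Ell{q}$ duality identifies it with $D^\alpha F(a)$ almost everywhere.

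For (c) and (d), set $c(t) := \norm{f_t}_\infty$ (measurable by Lemma~\ref{int.l.meas}) and $g(z,t) := f(z,t)/(1+c(t))$, so that $\abs{g} \le 1$. Theorem~\ref{bdd.t.mfHinf}(d) applied to $g$ with $E = \BM(\Om)$ and $\ud{\nu}(t) := h(t)(1+c(t))\,\mu(\ud{t})$ (a finite measure whenever $h(1+c) \in \Ell{1}(\mu)$) yields the Fubini identity (d) for every such $h$. To extend it to arbitrary $h \in \Ell{q}(\Om)$, I would truncate: pick $B_n \uparrow \Om$ with $\mu(B_n) < \infty$ and set $h_n := h \car_{B_n \cap \{c \le n\}}$, so that $h_n(1+c) \in \Ell{1}(\mu)$ and (d) holds for each $h_n$. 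H\"older's inequality gives $\sup_z \abs{\int_\Om f(z,t)(h-h_n)(t)\,\mu(\ud{t})} \le M\norm{h-h_n}_q \to 0$, so bp-continuity of $\vphi$ transports the right-hand side to the limit. Taking the supremum over $\norm{h}_q \le 1$ in the $h_n$-identity and invoking $\Ell{p}$--$\Ell{q}$ duality bounds $\norm{\vphi(f_\cdot) \car_{B_n \cap \{c \le n\}}}_p \le \norm{\vphi} M$, and Fatou then gives $\vphi(f_\cdot) \in \Ell{p}(\Om)$ together with the bound in (c). Dominated convergence passes the left-hand side of the $h_n$-identity to the limit as well, establishing (d) for every $h \in \Ell{q}(\Om)$. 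Finally, Theorem~\ref{hol.t.linearization} applied to $F \in \Ha^\infty(\gebiet; \Ell{p}(\Om))$ produces $\vphi_F \in \cls{\spann} F(\gebiet)$, and $\Ell{p}$--$\Ell{q}$ duality together with (d) forces $\vphi_F = \vphi(f_\cdot)$, completing (c).

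The step I expect to be the main obstacle is making the truncation robust without assuming $\mu$ is $\sigma$-finite, since then neither $\{c \le n\}$ nor the $B_n$'s above need have finite measure. I would reduce to that situation by restricting to $S := \bigcup_k \supp F(z_k)$ for a countable dense $\{z_k\} \subseteq \gebiet$: each $F(z_k) \in \Ell{p}$ has $\sigma$-finite support, hence so does $S$; and for $\mu$-a.e.\ $t \in \Om \setminus S$ the holomorphic function $f_t$ vanishes on the dense set $\{z_k\}$, and therefore identically on $\gebiet$. Both sides of (d) are consequently zero almost everywhere outside $S$, so the truncation need only be carried out on the $\sigma$-finite set $S$.
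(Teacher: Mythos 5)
Your overall architecture is sound and your conclusions are all reachable, but you take a genuinely different route from the paper in two places, and there are two repairable gaps. The paper never touches Fubini--Tonelli in its main proof: it establishes (a) by truncating $f$ itself, setting $f_n(z,t) := \tfrac{n}{n+\norm{f_t}_\infty}f(z,t)\car_{\Om_n}(t)$, applying Theorem~\ref{bdd.t.mfHinf} to each $f_n$ in a subspace of $\BM(\Om)$ that embeds into $\Ell{p}(\Om)$, and then using bp-closedness of $\Ha^\infty(\gebiet;\Ell{p}(\Om))$ to pass to the limit; (c) and (d) come from applying Theorem~\ref{bdd.t.mfHinf}\ref{i.bdd.t.mfHinf.d} to the same $f_n$ against the functionals $g\mapsto\int_{\Om_n}gh\,\ud\mu$ and letting $n\to\infty$. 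You instead prove (a) by Cauchy's formula plus Fubini against the norming set $\{\norm{h}_q\le 1\}$, and prove (d) by normalizing once ($g=f/(1+c)$) and truncating the \emph{test function} $h$ rather than $f$. Both truncation schemes rest on the same $\sigma$-finiteness reduction (your set $S$ is exactly the paper's $\bigcup_{z\in D}\tset{F(z)\neq 0}$), and your route for (a) is essentially the ``alternate proof via Mattner'' sketched in Section~\ref{s.matt}. What the paper's route buys is precisely the avoidance of joint measurability; what yours buys is a more classical and self-explanatory argument for (a).

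The two gaps. First, your Fubini/Minkowski steps in (a) and (b) need $(w,t)\mapsto f(w,t)$ to be product-measurable on (distinguished boundary)${}\times\Om$; you verify only the integrability hypothesis of Fubini, not the measurability one, and joint measurability is \emph{not} among the hypotheses of the theorem. It does hold automatically -- continuity in $w$, measurability in $t$ and second countability of the boundary torus give it via Lemma~\ref{matt.l.meas} -- but you must say so; the paper's own proof is deliberately engineered to avoid this point. Second, your justification for passing to the limit on the right-hand side of the $h_n$-identity, namely $\sup_z\bigabs{\int_\Om f(z,t)(h-h_n)(t)\,\mu(\ud t)}\le M\norm{h-h_n}_q\to 0$, fails when $p=1$, $q=\infty$ (take $h=\car$: then $\norm{h-h_n}_\infty$ need not tend to $0$). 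The conclusion survives because all you need is bp-convergence of $z\mapsto\int_\Om f(z,t)h_n(t)\,\mu(\ud t)$ to $z\mapsto\int_\Om f(z,t)h(t)\,\mu(\ud t)$, which follows from the uniform bound $M\norm{h}_q$ together with pointwise (in $z$) dominated convergence, using $\abs{f(z,\cdot)h_n}\le\abs{F(z)h}\in\Ell{1}(\Om)$ and $f(z,\cdot)=0$ a.e.\ off $S$. With these two repairs the argument goes through.
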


\begin{proof} 
 (1)\  Observe that $f$ meets the conditions
of Lemma \ref{int.l.meas} and hence that the functions
\[  t \mapsto D_z^\alpha f(a, t),\quad t\mapsto \norm{f_t}_\infty\quad
 \text{and}\quad t\mapsto
\vphi(f_t)
\]
are measurable.

\prfnoi
(2)\  Let $D\subseteq \gebiet$ be a countable dense
set. Then, since each $f_t$ is
  continuous,
  \[
    \bigcup_{z\in \gebiet} \set{F(z) \neq 0} = \bigcup_{z\in D}
    \set{F(z)\neq 0},
  \]
where
  $\set{F(z) \neq 0} = \{ t\in \Om \suchthat f(z,t) \neq 0\}$.  The
  right-hand side is a $\sigma$-finite subset of $\Om$. Hence, we may
  suppose that $\mu$ is $\sigma$-finite. Accordingly, we fix
  measurable subsets $\Omega_n \subseteq \Om$ with
  $\mu(\Om_n) < \infty$ and $\Omega_n \nearrow \Omega$.

\prfnoi
(3)\ For each $n\in \N$ let
  \[
    f_n : \gebiet \times \Om \to \C, \quad f_n(z,t) := \frac{n}{n +
      \norm{f_t}_\infty} f(z,t)\cdot \car_{\Omega_n}(t) \qquad(z\in
    \gebiet,\, t\in \Omega).
  \]
  Then $f_n$ is bounded, measurable in the second and holomorphic in
  the first variable. Applying Theorem~\ref{bdd.t.mfHinf} with
  $E = E_n := \{ g\in \BM(\Om) \suchthat g \car_{\Om_n^c} = 0\}$ we
  conclude that the function
  \[
    F_n  : \gebiet \to E_n,\qquad F_n(z) := f_n(z, \cdot)
  \]
  is bounded and holomorphic and
  \[
    D^\alpha F_n(a)(t) = D^\alpha_z f_n(a, t) = \frac{n}{n +
      \norm{f_t}_\infty} D^\alpha_z f(a,t)\cdot \car_{\Omega_n}(t)
    \qquad(t\in \Om).
  \]
  Since $\mu(\Omega_n) < \infty$, $E_n \subseteq \Ell{p}(\Om)$
  continuously and hence $F_n\in \Ha^\infty(\gebiet;
  \Ell{p}(\Om))$. Clearly, the sequence $(F_n)_n$ bp-converges to $F$,
  and hence $F \in \Ha^\infty(\gebiet; \Ell{p}(\Om))$.
 Moreover, for each $a\in \gebiet$
\[   D^\alpha F_n(a) \to  D^\alpha F(a) \quad \text{in $\Ell{p}$
  and}\quad   D^\alpha_z f_n(a, \cdot) \to D^\alpha_z f(a,\cdot) \quad
\text{pointwise}.
\] It follows  that
$D^\alpha F(a) = D^\alpha_z f(a,\cdot)$ almost everywhere and the
proof of \ref{i.int.t.ifHinf.a} is
  complete. 

\prfnoi
(4) Assertion \ref{i.int.t.ifHinf.b} follows from
  \ref{i.int.t.ifHinf.a} and Lemma \ref{hol.l.lin} on noting that
  integration against $h\in \Ell{q}(\Om)$ is a bounded linear
  functional on $\Ell{p}(\Om)$.

  \prfnoi (5)\ Since $\Omega_n$ has finite measure, for each
  $h\in \Ell{q}(\Om)$, the map
  $g \mapsto \int_{\Omega_n} g \, h \, \ud{\mu}$ defines a
  bounded linear functional on $\BM(\Om)$. Hence, by
  \ref{i.bdd.t.mfHinf.d} of Theorem~\ref{bdd.t.mfHinf} applied to
  $f_n$,
  \[
    \int_{\Omega_n} \frac{n}{n + \norm{f_t}_\infty} \vphi(f_t) h(t)\,
    \mu(\ud{t}) = \vphi\Bigl( z \mapsto \int_{\Omega_n} \frac{n}{n +
      \norm{f_t}_\infty} f(z,t)h(t)\, \mu(\ud{t})\Bigr).
  \]
  Varying $h$ we arrive at
  \[
    \int_{\Om_n} \Bigl(\frac{n}{n+\norm{f_t}_\infty}\Bigr)^p
    \abs{\vphi(f_t)}^p\, \mu(\ud{t}) \le \norm{\vphi}^p \sup_{z\in
      \gebiet}\norm{F(z)}_p^p < \infty.
  \]
  When $n\to \infty$ it follows that
  $(t\mapsto \vphi(f_t)) \in \Ell{p}(\Omega)$ and
  \[
    \norm{t\mapsto \vphi(f_t)}_p \le \norm{\vphi} \sup_z \norm{F(z)}_p.
  \]
  Moreover, by the bp-continuity of $\vphi$,
  \[
    \int_\Omega \vphi(f_t)h(t)\, \mu(\ud{t}) = \vphi\Bigl(z \mapsto
    \int_\Omega f(z,t)h(t)\, \mu(\ud{t})\Bigr),
  \]
  which is \ref{i.int.t.ifHinf.d}. The remaining part of
  \ref{i.int.t.ifHinf.c} is a consequence of \ref{i.int.t.ifHinf.d}
  and the Hahn--Banach theorem.
\end{proof}

% %%%%%%%%%%% Nachtrag L^\infty

The preceding theorem just covers the case $1\le p < \infty$. A result
for $p = \infty$ needs a special assumption on
the measure space. 
A  measure space $(\Omega, \Sigma, \mu)$ is called
{\em semi-finite} if for every $B \in \Sigma$ with $\mu(B)={+}\infty$ there
exists some $A\in \Sigma$, $A\subseteq B$ such that $0 < \mu(A) < \infty$.
It is a well-known (and easy-to-prove) fact that the unit ball of $\Ell{1}(\Omega)$ is
a norming set for $\Ell{\infty}(\Omega)$ if and only if $(\Omega,
\Sigma, \mu)$ is semi-finite. Moreover, on a semi-finite measure
space, a measurable function $f$ vanishes almost everywhere  if and only if
for each $g\in \Ell{1}(\Om)$, the product $fg$ vanishes almost everywhere.

\begin{thm}\label{int.t.Linf}
Let $(\Omega, \Sigma, \mu)$ be a semi-finite measure space,
  $\gebiet \subseteq \C^d$ an open subset of $\C^d$  and
\[  f: \gebiet \times \Omega \to \C
\]
a function with the following properties:
\begin{itemize}
\item $\forall z \in \gebiet: \;  F(z) := f(z, \cdot)$ is measurable %for each $z\in \gebiet$
  and  $\displaystyle \sup_{z\in \gebiet} \underset{t \in \Omega}{\wessup}
    \abs{f(z,t)} < \infty$; \smallskip

\item $\forall t \in \Omega: \; f_t := f(\cdot, t)$ is bounded and
  holomorphic.\smallskip
\end{itemize}
Fix $\alpha \in \N_0^d$, $a\in \gebiet$,  and a bp-continuous linear functional $\vphi:
\Ha^\infty(\gebiet)\to \C$. 
 Then the following
assertions hold:
\begin{aufzi}
\item \label{label.Linf.t.a}
  $F \in \Ha^\infty(\gebiet; \Ell{\infty}(\Omega))$
  and
  $D^\alpha F(a) = D^\alpha_z f(a,\cdot)$ $\mu$-almost everywhere.
\item \label{label.Linf.t.b} For each $g\in \Ell{1}(\Om)$ the
  function $\dps z \mapsto \int_\Om f(z,t)g(t)\, \mu(\ud{t})$ is
  holomorphic and
  \[
    D^\alpha \Bigl( z \mapsto \int_\Om f(z,t)g(t)\,
    \mu(\ud{t})\Bigr)(a) = \int_\Om D^\alpha_z f(a,t) g(t)\,
    \mu(\ud{t}).
\]
\item \label{label.Linf.t.c} The measurable function
  $t\mapsto \vphi( f_t)$ is 
  essentially bounded with
  \[
    \underset{t \in \Omega}{\wessup} \abs{\vphi(f_t)}  \le \norm{\vphi} \sup_{z\in O} \bignorm{F(z)}_{\Ell{\infty}(\Omega)},
  \]
  and it is contained in the subspace
  $\cls{\spann\, F(O)}^{\sigma(\Ell{\infty}, \Ell{1})} \subseteq \Ell{\infty}(\Om)$.
\item \label{label.Linf.t.d}
  $\displaystyle \int_\Omega\vphi( f_t) g(t) \, \mu(\ud{t}) =
  \vphi\Bigl(z \mapsto \int_\Omega f(z,t)g(t)\, \mu(\ud{t})\Bigr)$ for
  all $g\in \Ell{1}(\Om)$.
\end{aufzi}  
\end{thm}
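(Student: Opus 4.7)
The plan is to reduce Theorem~\ref{int.t.Linf} to Theorem~\ref{int.t.ifHinf} with $p{=}1$ via the $(\Ell{\infty},\Ell{1})$-duality. For each $g \in \Ell{1}(\Om)$, consider the auxiliary function $\tilde f_g(z,t) := f(z,t) g(t)$. The hypothesis $C := \sup_z \wessup_t |f(z,t)| < \infty$ yields $\int_\Om |\tilde f_g(z,t)|\, \mu(\ud t) \le C\|g\|_1$ uniformly in $z$, while the other hypotheses of Theorem~\ref{int.t.ifHinf} are inherited trivially (measurability of $\tilde f_g(z,\cdot)$; boundedness and holomorphy of $(\tilde f_g)_t = g(t) f_t$). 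This is the engine of the proof.

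For~\ref{label.Linf.t.a}, semi-finiteness of $\mu$ makes $N := \{g \in \Ell{1}(\Om) : \|g\|_1 \le 1\}$ a norming subset of $(\Ell{\infty})'$. Applying Theorem~\ref{int.t.ifHinf}\ref{i.int.t.ifHinf.b} to each $\tilde f_g$ (with $h\equiv 1$) shows that $z \mapsto \dprod{F(z)}{g} = \int f(z,t) g(t)\, \mu(\ud t)$ is holomorphic for every $g \in N$; combined with the uniform $\Ell{\infty}$-bound on $F$, Theorem~\ref{hol.t.char}\ref{i.hol.t.char.c} yields $F \in \Ha^\infty(\gebiet;\Ell{\infty}(\Om))$. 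The identity $D^\alpha F(a) = D^\alpha_z f(a,\cdot)$ a.e.\ is obtained by comparing $\dprod{D^\alpha F(a)}{g}$ (computed via Lemma~\ref{hol.l.lin}) with $\int D^\alpha_z f(a,t) g(t)\, \mu(\ud t)$ (obtained from the same application of Theorem~\ref{int.t.ifHinf}\ref{i.int.t.ifHinf.b}) and then invoking semi-finiteness to pass from ``integrated equality against every $g \in \Ell{1}$'' to pointwise a.e.\ equality. Part~\ref{label.Linf.t.b} is already contained in this discussion.

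Part~\ref{label.Linf.t.d} is the direct output of Theorem~\ref{int.t.ifHinf}\ref{i.int.t.ifHinf.d} applied to $\tilde f_g$ with $h \equiv 1$, using that $\vphi((\tilde f_g)_t) = g(t)\vphi(f_t)$ by linearity of $\vphi$. For~\ref{label.Linf.t.c}, apply Theorem~\ref{hol.t.linearization} to $F \in \Ha^\infty(\gebiet;\Ell{\infty}(\Om))$ to obtain $\vphi_F$ in the norm closure of $\spann F(\gebiet)$ (\emph{a fortiori} in the $\sigma(\Ell{\infty},\Ell{1})$-closure) satisfying $\dprod{\vphi_F}{g} = \vphi(z\mapsto\dprod{F(z)}{g})$ for all $g \in \Ell{1}(\Om)$. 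Comparing with~\ref{label.Linf.t.d} and using semi-finiteness once more forces $\vphi(f_t) = \vphi_F(t)$ almost everywhere, and the essential-supremum bound is an immediate duality estimate: $\|\vphi_F\|_\infty = \sup_{g\in N} |\dprod{\vphi_F}{g}| \le \|\vphi\| \sup_z \|F(z)\|_\infty$.

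The main obstacle is conceptual rather than technical: because $f$ need not be bounded on $\gebiet \times \Om$ (the null sets on which $|f(z,\cdot)|$ is large may depend on $z$), Theorem~\ref{bdd.t.mfHinf} does not apply directly, and the truncation argument used inside the proof of Theorem~\ref{int.t.ifHinf} is likewise unavailable. Routing everything through the auxiliary functions $\tilde f_g$ is the crux, and semi-finiteness plays the dual role of making $N$ norming (so that Theorem~\ref{hol.t.char}\ref{i.hol.t.char.c} is applicable) and of converting integrated identities into a.e.\ equalities.
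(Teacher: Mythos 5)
Your proposal is correct and, in its essentials, coincides with the paper's own proof: both route everything through the auxiliary functions $(z,t)\mapsto f(z,t)g(t)$ for $g\in\Ell{1}(\Om)$, apply Theorem~\ref{int.t.ifHinf} with $p=1$ and $h=\car$, and use semi-finiteness both to make the unit ball of $\Ell{1}(\Om)$ norming for $\Ell{\infty}(\Om)$ and to upgrade ``equality after multiplication by every $g\in\Ell{1}(\Om)$'' to almost-everywhere equality. The only genuine divergence is in part \ref{label.Linf.t.c}: the paper obtains the essential bound by varying $g$ in the $\Ell{1}$-estimate of Theorem~\ref{int.t.ifHinf}\,\ref{i.int.t.ifHinf.c} and then deduces the containment in the $\sigma(\Ell{\infty},\Ell{1})$-closure of $\spann F(\gebiet)$ from \ref{label.Linf.t.d} via Hahn--Banach, whereas you invoke the Linearization Theorem~\ref{hol.t.linearization} directly with $E=\Ell{\infty}(\Om)$. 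Your route is legitimate (that theorem holds for an arbitrary Banach space $E$) and in fact yields the slightly stronger conclusion that $t\mapsto\vphi(f_t)$ lies in the \emph{norm} closure of $\spann F(\gebiet)$, not merely in its weak$^*$ closure. One small omission: you never establish that $t\mapsto\vphi(f_t)$ is measurable as a genuine function, which you need before you can identify it almost everywhere with $\vphi_F$; the paper gets this, together with the measurability of $t\mapsto D^\alpha_z f(a,t)$, from Lemma~\ref{int.l.meas} applied to $f$ itself at the outset. This is easily repaired and does not affect the correctness of your argument.
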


\begin{proof}
  We start by noticing as in the proof of Theorem~\ref{int.t.ifHinf},
  that by Lemma \ref{int.l.meas} the functions
  $t\mapsto D_z^\alpha f(a, t), \vphi(f_t)$ are measurable.

  For any $g \in \Ell{1}(\Omega)$, the function
  $(z,t) \mapsto f(z, t)g(t)$ satisfies the assumptions of
  Theorem~\ref{int.t.ifHinf}, for the case $p{=}1$.  Hence
  \ref{i.int.t.ifHinf.b} of that theorem (with $h= \car$) is just the
  same as \ref{label.Linf.t.b} of the present theorem. Recall that,
  since $(\Om, \Sigma,\mu)$ is semi-finite, the unit ball of
  $\Ell{1}(\Omega)$ is norming for $\Ell{\infty}(\Omega)$. Since
  $F: \gebiet \to \Ell{\infty}(\Omega)$ is bounded, it follows from
  \ref{label.Linf.t.b} and Theorem~\ref{hol.t.char} that
  $F \in \Ha^\infty(\gebiet; \Ell{\infty}(\Omega))$.

  From part \ref{i.int.t.ifHinf.b} of Theorem~\ref{int.t.ifHinf} it
  follows that $D^\alpha F(a) g = D_z^\alpha f(a,\cdot) g$
  $\mu$-almost everywhere. As $g\in \Ell{1}(\Om)$ is arbitrary here
  and $D_z^\alpha f(a,\cdot)$ is measurable, this implies that
  $D^\alpha F(a) = D_z^\alpha f(a,\cdot)$ $\mu$-almost
  everywhere. Hence, \ref{label.Linf.t.a} is proved.

\prfnoi
  Part \ref{i.int.t.ifHinf.c} of Theorem~\ref{int.t.ifHinf} implies
  that $t \mapsto \vphi(f_t)g(t)$ is integrable with
  \[
    \int_\Om \abs{\vphi(f_t) g(t)}\, \mu(\ud{t}) \le \norm{\vphi}
    \sup_z \norm{F(z)g}_{\Ell{1}} \le \norm{\vphi} \sup_z
    \norm{F(z)}_{\Ell{\infty}} \cdot \norm{g}_{\Ell{1}}.
\]
Varying $g$ yields that $t\mapsto \vphi(f_t)$ is essentially bounded
with
\[
  \underset{t \in \Omega}{\wessup} \abs{\vphi(f_t)} \le \norm{\vphi}
  \sup_z \norm{F(z)}_{\Ell{\infty}},
\]
which is part \ref{label.Linf.t.c} halfway. 
Again by Theorem~\ref{int.t.ifHinf}~\ref{i.int.t.ifHinf.d}
applied with $h = \car$, yields \ref{label.Linf.t.d}. Finally,  it
follows from \ref{label.Linf.t.d} and a standard application of the  Hahn--Banach
theorem  
that the function $t \mapsto \vphi(f_t)$ (as an element of
$\Ell{\infty}(\Om)$) is contained in the $\sigma(\Ell{\infty},
\Ell{1})$-closure of $\spann F(\gebiet)$. This
concludes the proof. 
\end{proof}

% %%%%%%%%%%% Ende Nachtrag.

\begin{remarks}\label{int.r.Stein}
  1)\ For $d{=}1$ and $p{=}1$, some parts of
  Theorem~\ref{int.t.ifHinf} have been proved by Mattner in
  \cite{Mattner2001}.  Mattner also has shown in
  \cite[Counterexample~1]{Mattner2001} that if one replaces the first
  condition in Theorem~\ref{int.t.ifHinf} by the weaker one
\[  \int_\Om \abs{f(z,t)}\, \mu(\ud{t}) < \infty \quad \text{for all
  $z\in \gebiet$},
\]
then the function $z \mapsto \int_\Om f(z,t)\, \mu(\ud{t})$ need not
be continuous, let alone holomorphic. In particular, assertion
\ref{i.int.t.ifHinf.a} may fail.

In Section~\ref{s.matt} below, we shall review Mattner's results from
\cite{Mattner2001} and relate them to ours.

\medskip \noindent 2)\ The following result by Stein tells us that
{\em each} holomorphic $\Ell{p}$-valued function arises in the way
considered in Theorem~\ref{int.t.ifHinf}.

\smallskip 
\noindent
{\bf Theorem (Stein).}\
{\em 
  Let $X$ be a complex Banach space, $(\Omega, \Sigma, \mu)$ a $\sigma$-finite
  measure space, $1\le p < \infty$, and $\gebiet \subseteq \C$ an open
  set. Let $F: \gebiet \to \Ell{p}(\Omega; X)$ be a holomorphic function. Then
  there exists a function $f: \gebiet \times \Omega \to X$ such that
  \begin{itemize}
    \item $f$ is strongly (product) measurable;
    \item for every $t \in \Omega$, $f(\cdot, t)$ is holomorphic;
    \item for every $z \in \gebiet$, $f(z, \cdot) = F(z)$ almost everywhere. 
  \end{itemize}
}

\noindent This theorem goes back to the lemma on page~72 of Stein's
book \cite{Stein:LPT} for the special case of orbits of holomorphic
semigroups on sectors.  Desch and Homan in \cite{DeschHoman} proved
the theorem in full generality and with all the details.
\end{remarks}

\section{Mattner's results}\label{s.matt}

Theorem~\ref{int.t.ifHinf} and in particular the Fubini-type result in
assertion \ref{i.int.t.ifHinf.d} may be surprising on first glance,
since the joint measurability of the function $f$ is not assumed (and
not needed in the proof). However, as the following result shows,
joint measurability is automatic, at least in the case we consider
here ($\gebiet \subseteq \C^d$).

\begin{lemma}[Mattner {\cite[p.33]{Mattner2001}}]\label{matt.l.meas}
  Let $\gebiet$ be a second countable topological space, let
  $(\Om,\Sigma)$ be a measurable space and let $X$ be a metric
  space. Furthermore, let
  \[
    f: \gebiet \times \Om \to X
  \]
  be a mapping such that $f(\cdot, t): \gebiet \to X$ is continuous
  for each $t\in \Om$ and $f(z,\cdot): \Om \to X$ is measurable
  $\Sigma$-to-$\Borel$. Then $f$ is measurable
  $(\Borel\tensor\Sigma)$-to-$\Borel$.
\end{lemma}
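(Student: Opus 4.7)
The plan is to reduce to showing $f^{-1}(V) \in \Borel\tensor\Sigma$ for every closed $V \subseteq X$, which suffices because the closed sets generate $\Borel$ on $X$. Since $\gebiet$ is second countable, I fix a countable base $\{B_k\}_{k \in \N}$ of $\gebiet$ and a countable dense subset $D = \{z_m\}_{m \in \N}$. For each $n \in \N$ let $\bar{V}_{1/n} := \{x \in X : d(x,V) \le 1/n\}$ be the closed $1/n$-enlargement of $V$, and set
\[
A_{k,n} := \bigcap_{m : z_m \in B_k} \{t \in \Om : f(z_m, t) \in \bar{V}_{1/n}\}.
\]
Each set in the intersection lies in $\Sigma$ by the hypothesis that $f(z_m, \cdot)$ is measurable, and the index set is countable, so $A_{k,n} \in \Sigma$.

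The core claim is the identity
\[
f^{-1}(V) = \bigcap_{n \in \N} \bigcup_{k \in \N} (B_k \times A_{k,n}),
\]
whose right-hand side is manifestly in $\Borel\tensor\Sigma$. For the forward inclusion, if $f(z,t) \in V$ then $f(z,t)$ lies in the \emph{open} set $\{x : d(x,V) < 1/n\} \subseteq \bar{V}_{1/n}$; by continuity of $f(\cdot,t)$ and the fact that $\{B_k\}$ is a base, one finds a $k$ with $z \in B_k$ and $f(z',t) \in \bar{V}_{1/n}$ for every $z' \in B_k$, hence in particular $t \in A_{k,n}$. For the reverse inclusion, fix $(z,t)$ in the right-hand side and $n \in \N$, and pick $k$ with $z \in B_k$ and $t \in A_{k,n}$. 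By continuity of $f(\cdot,t)$ and closedness of $\bar{V}_{1/n}$, the set $C_{n,t} := \{z' \in \gebiet : f(z',t) \in \bar{V}_{1/n}\}$ is closed in $\gebiet$; by definition of $A_{k,n}$ it contains $D \cap B_k$. Because $B_k$ is open and $D$ is dense in $\gebiet$, the set $D \cap B_k$ is dense in $B_k$, so $C_{n,t} \supseteq B_k \ni z$ and $f(z,t) \in \bar{V}_{1/n}$. Since $V$ is closed, $V = \bigcap_n \bar{V}_{1/n}$, and intersecting over $n$ yields $f(z,t) \in V$.

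The principal obstacle is calibrating the enlargements correctly. One cannot simply write $f^{-1}(V)$ as $\bigcup_k B_k \times \bigcap_m \{f(z_m,t) \in V\}$ — a dense subset of $B_k$ landing in the \emph{open} set $V$ does not force $f(z,t) \in V$ for arbitrary $z \in B_k$. The use of the closed enlargements $\bar{V}_{1/n}$ is precisely what makes the set $C_{n,t}$ closed, so that density of $D \cap B_k$ in $B_k$ transfers to containment; the subsequent intersection over $n$ then recovers $V$. Everything else is bookkeeping, and the countability of the base and of $D$ ensures that the resulting Boolean combination of measurable rectangles stays inside $\Borel\tensor\Sigma$.
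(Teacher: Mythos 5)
Your argument is correct. Note, however, that the paper itself offers no proof of this lemma: it is quoted from Mattner with a citation, so there is no in-paper argument to compare against; your proof therefore has to stand on its own, and it does. The set identity
\[
  f^{-1}(V) \;=\; \bigcap_{n}\bigcup_{k}\bigl(B_k\times A_{k,n}\bigr)
\]
is verified correctly in both directions: the forward inclusion uses that $V$ sits inside the \emph{open} $\tfrac1n$-neighbourhood of $V$, which is contained in the closed enlargement $\bar V_{1/n}$, so continuity of $f(\cdot,t)$ produces a basic neighbourhood $B_k$ mapped entirely into $\bar V_{1/n}$; the reverse inclusion uses that $\{z' : f(z',t)\in\bar V_{1/n}\}$ is closed and contains the dense subset $D\cap B_k$ of the open set $B_k$, hence contains $B_k$; and $\bigcap_n\bar V_{1/n}=V$ because $V$ is closed. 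You correctly identify the one place where a naive version fails (density does not transfer membership in an open target set), and the measurability of each $A_{k,n}$ and of the resulting countable Boolean combination is routine. The only remarks worth making: the degenerate cases ($B_k=\emptyset$, or $V=\emptyset$ with the convention $d(x,\emptyset)=+\infty$) are harmless but could be mentioned; and the more commonly seen route to this lemma approximates $f$ pointwise by functions $f_n(z,t)=f(\zeta_n(z),t)$ that are countably valued in $z$, then invokes the fact that a pointwise limit of jointly measurable maps into a metric space is jointly measurable --- your direct set-theoretic identity avoids that limit theorem and works verbatim for a general second countable (not necessarily metrizable) domain $\gebiet$.
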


\begin{remark}
  Joint measurability results appear to have a long tradition, see
  e.g. \cite[p.42]{Grothendieck1953}, \cite{Averna1986},
  \cite[Lemma~4.51]{AliprantisBorderIDA}. However, they are seldom
  mentioned in courses on measure theory and not widely
  known. Mattner's version appears to be the strongest so far where
  measurability in one variable is paired with continuity in the
  other.
\end{remark}

\noindent  With the help of Lemma~\ref{matt.l.meas}, Mattner in
\cite{Mattner2001} established the following theorem (slightly adapted
notationally).

\begin{thm}[Mattner {\cite[p.32]{Mattner2001}}]\label{matt.t.matt}
  Let $(\Om, \Sigma,\mu)$ be a measure space, let
  $\emptyset \not= O \subseteq \C$ be open, and let
  $f: O\times \Om\to \C$ be a function subject to the following
  assumptions:
\begin{mattnerlist}{A}
\item \label{i.matt.t.matt.a1} $f(z, \cdot)$ is $\Sigma$-measurable for every $z \in O$, 
\item \label{i.matt.t.matt.a2} $f (\cdot ,t)$ is holomorphic for every $t \in \Om$,
\item \label{i.matt.t.matt.a3} $\displaystyle  \int \abs{f (\cdot, t)} \mu(\ud{t})$ is locally bounded.
\end{mattnerlist}
Then $\displaystyle z\mapsto \int_\Om f(z,t)\, \mu(\ud{t})$ is
holomorphic and may be differentiated under the integral. More
precisely, for each $n\in \N_0$:
\begin{mattnerlist}{C}
\item \label{i.matt.t.matt.c1} $D^n_zf$ is
  $\Borel(O) \tensor \Sigma$-measurable and, for every
  $\emptyset \not=A \subseteq O$, the function
  \( t \mapsto \sup_{z\in A} \abs{D^n_zf(z, t)}\) is
  $\Sigma$-measurable,
\item \label{i.matt.t.matt.c2} If $\displaystyle K \subseteq O$ is
  compact, then
  \( \displaystyle \sup_{z\in K} \int_\Om \abs{D^n_z f(z,t)}\,
  \mu(\ud{t}) < \infty. \)
\item \label{i.matt.t.matt.c3}
  $\displaystyle z \mapsto \int_\Om f (z, t)\, \mu(\ud{t})$ is
  holomorphic on O with
  \[
    D^n_z \int_\Om f (z, t)\, \mu(\ud{t}) = \int_\Om D^n_z f (z, t)\,
    \mu(\ud{t}).
\]
\end{mattnerlist}
\end{thm}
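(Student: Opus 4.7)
The plan is to establish \ref{i.matt.t.matt.c1}, \ref{i.matt.t.matt.c2} and \ref{i.matt.t.matt.c3} in that order, bootstrapping joint measurability via Lemma~\ref{matt.l.meas} and then exploiting the Cauchy integral formula on small circles inside $O$ together with Fubini's theorem.

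First, I would verify that $f$ itself is $\Borel(O) \tensor \Sigma$-measurable: by \ref{i.matt.t.matt.a2} each slice $f(\cdot, t)$ is holomorphic, hence continuous, while \ref{i.matt.t.matt.a1} gives measurability of each $f(z, \cdot)$. Since $O \subseteq \C$ is second countable, Lemma~\ref{matt.l.meas} applies. For $D_z^n f$ I would argue by induction on $n$: writing
\[
  D_z f(z, t) = \lim_{k\to\infty} k\bigl(f(z + 1/k, t) - f(z, t)\bigr)
\]
(valid for all sufficiently large $k$ since $O$ is open) shows that $D_z f(z, \cdot)$ is measurable as a pointwise limit of measurable functions; since $D_z f(\cdot, t)$ is holomorphic, Lemma~\ref{matt.l.meas} again yields joint measurability, and the inductive step is identical. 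Continuity of $D_z^n f$ in $z$ combined with second countability of $O$ lets one replace any nonempty $A \subseteq O$ by a countable dense subset, so $\sup_{z\in A} \abs{D_z^n f(z,t)}$ is a countable supremum of measurable functions and thus measurable. This finishes \ref{i.matt.t.matt.c1}.

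For \ref{i.matt.t.matt.c2}, given a compact $K \subseteq O$ pick $r > 0$ such that the closed $r$-neighbourhood $K_r$ is contained in $O$ and set $M := \sup_{w \in K_r} \int_\Om \abs{f(w,t)}\, \mu(\ud{t})$, which is finite by \ref{i.matt.t.matt.a3}. The Cauchy derivative formula gives
\[
  D_z^n f(z,t) = \frac{n!}{2\upi\ui} \oint_{\abs{w-z} = r} \frac{f(w,t)}{(w-z)^{n+1}}\, \ud{w} \qquad (z\in K),
\]
and upon taking absolute values, integrating against $\mu$, and swapping integrals via Fubini (justified by the joint measurability from \ref{i.matt.t.matt.c1}), one obtains $\int_\Om \abs{D_z^n f(z,t)}\, \mu(\ud{t}) \le n!\,M/r^n$ uniformly for $z \in K$.

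To establish \ref{i.matt.t.matt.c3}, set $g(z) := \int_\Om f(z,t)\, \mu(\ud{t})$. For any $z_0 \in O$ and $r > 0$ with $\cls{\Ball(z_0;r)} \subseteq O$, applying the Cauchy integral formula to $f(\cdot, t)$ on $\Ball(z_0;r)$ and exchanging the order of integration by Fubini (again legitimate, since $w \mapsto \int_\Om \abs{f(w,t)}\, \mu(\ud{t})$ is bounded on the circle $\abs{w-z_0} = r$) gives
\[
  g(z) = \frac{1}{2\upi\ui} \oint_{\abs{w-z_0}=r} \frac{g(w)}{w-z}\, \ud{w} \qquad (\abs{z-z_0} < r).
\]
This represents $g$ as a Cauchy integral against a continuous function on the bounding circle, so $g$ is holomorphic on $O$; differentiating $n$ times under the contour integral and then swapping integrals once more, now justified by the uniform bound from \ref{i.matt.t.matt.c2}, produces the identity $D^n g(z) = \int_\Om D_z^n f(z, t)\, \mu(\ud{t})$. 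The main obstacle throughout is ensuring that every Fubini exchange is legal: this is exactly where Lemma~\ref{matt.l.meas} is indispensable, since joint measurability of $f$ is not part of the hypotheses.
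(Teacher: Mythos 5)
Your proof is correct in substance, but it takes a genuinely different route from the paper's. The paper does not prove Theorem~\ref{matt.t.matt} from scratch: it derives C\ref{i.matt.t.matt.c3} from part \ref{i.int.t.ifHinf.b} of Theorem~\ref{int.t.ifHinf} (whose proof runs through the linearization Theorem~\ref{hol.t.linearization} and a truncation of $f$ to bounded functions, and uses neither Fubini nor joint measurability), obtains C\ref{i.matt.t.matt.c1} from \ref{i.int.t.ifHinf.a} together with Lemma~\ref{matt.l.meas} and the same countable-supremum observation you make, and deduces C\ref{i.matt.t.matt.c2} from the order-boundedness Lemma~\ref{matt.l.order-bounded}, proved via a power-series expansion rather than a Cauchy estimate. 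Your argument is instead the classical direct one --- bootstrap joint measurability via Lemma~\ref{matt.l.meas}, then combine the Cauchy formula with Fubini--Tonelli --- which is essentially Mattner's original proof; it is self-contained and elementary, whereas the paper's detour buys the much stronger Theorem~\ref{int.t.ifHinf} (several variables, all $1\le p<\infty$, arbitrary bp-continuous functionals) of which Mattner's statement is a special case. Two small repairs are needed: (i) Tonelli/Fubini requires $\sigma$-finiteness, which is not assumed of $(\Om,\Sigma,\mu)$; you must first observe that $\bigcup_{z\in D}\{t\in\Om \suchthat f(z,t)\neq 0\}$, with $D$ a countable dense subset of $O$, is $\sigma$-finite by A\ref{i.matt.t.matt.a3} and carries all of $f$, exactly as in step (2) of the proof of Theorem~\ref{int.t.ifHinf}; (ii) you cannot yet assert that $g$ is \emph{continuous} on the bounding circle --- at that stage you only know it is bounded there and measurable (the latter again from the partial-integral part of Fubini) --- but boundedness and measurability already suffice for the Cauchy-type integral to define a holomorphic function, so the conclusion stands.
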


\noindent Assertion~\ref{i.matt.t.matt.c3} in Theorem~\ref{matt.t.matt} is
covered (literally) by part \ref{i.int.t.ifHinf.b} of Theorem
\ref{int.t.ifHinf}. The joint measurability assertion in
C\ref{i.matt.t.matt.c1} follows directly from \ref{i.int.t.ifHinf.a}
and Lemma~\ref{matt.l.meas}; and the remaining part of
C\ref{i.matt.t.matt.c1} follows since the supremum is effectively a
supremum over a countable subset of $A$. (Mattner employs the same
argument).  Finally, C\ref{i.matt.t.matt.c2} is a straightforward
consequence of \ref{i.int.t.ifHinf.a} and the following general
result.

\begin{lemma}\label{matt.l.order-bounded}
  Let $\gebiet \subseteq \C^d$ open, $E$ a  Banach lattice
  and $F: \Om \to E$ holomorphic. Then for each
  compact $K \subseteq \gebiet$ the set $F(K)$ is order bounded in
  $E$, i.e., there is $0 \le u \in E$ such that $\abs{F(z)} \le u$
  for all $z\in K$.
\end{lemma}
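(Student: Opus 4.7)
The plan is to bound $\abs{F(z)}$ uniformly (in the lattice sense) on a small polydisc around each point of $K$ by means of the Cauchy integral formula, and then to patch these local order-bounds together using compactness of $K$.

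First, I would fix $a\in K$ and choose $r>0$ such that the closed polydisc $\cls{\prod_{j=1}^d \Ball(a_j;2r)}$ is contained in $\gebiet$. Since $F$ is continuous and bounded on the distinguished torus $\Gamma_a := \prod_{j=1}^d \{\, w_j \in \C \suchthat \abs{w_j-a_j}=2r\,\}$, the map $F$ is Bochner integrable on $\Gamma_a$ and Theorem~\ref{hol.t.char} gives, for all $z$ with $\abs{z-a}_\infty \le r$,
\[
F(z) = \frac{1}{(2\pi\ui)^d} \int_{\Gamma_a} \frac{F(w)}{(w_1-z_1)\cdots (w_d-z_d)}\, \ud{w_1}\cdots \ud{w_d}.
\]
The key step is then the Banach lattice inequality
\[
\Bigl| \int_\Gamma \psi(w) G(w) \, \ud{\nu}(w)\Bigr| \le \int_\Gamma \abs{\psi(w)}\, \abs{G(w)}\, \ud{\nu}(w)
\]
valid for scalar $\psi$ and Bochner integrable $G: \Gamma \to E$ (established by approximation by simple functions and continuity of the lattice modulus). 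Applying this with $\psi(w) = \prod_j (w_j-z_j)^{-1}$ and using $\abs{w_j-z_j}\ge r$ for $\abs{z-a}_\infty\le r$ yields
\[
\abs{F(z)} \le \frac{1}{(2\pi r)^d} \int_{\Gamma_a} \abs{F(w)}\, \abs{\ud{w_1}}\cdots \abs{\ud{w_d}} \; =:\; u_a,
\]
where $u_a \ge 0$ is a well-defined element of $E$ (the integrand is continuous from $\Gamma_a$ into $E$ because $x\mapsto \abs{x}$ is Lipschitz on $E$) and is independent of $z$.

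Finally, I cover the compact set $K$ by finitely many polydiscs $\{\abs{z-a_i}_\infty < r_i\}$ of the above type, $i=1,\dots,n$. Setting $u := u_{a_1}\vee\dots\vee u_{a_n}\in E$, we obtain $\abs{F(z)}\le u$ for all $z\in K$, as claimed.

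The main obstacle is the verification of the Bochner-integral lattice inequality, which in $E$-valued form is slightly beyond what is usually stated in textbooks; however it reduces to the standard finite-dimensional inequality via approximation by simple functions and the continuity of the modulus.
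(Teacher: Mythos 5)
Your proof is correct, but it takes a genuinely different route from the paper's. The paper argues via a local power series expansion: writing $F(z)=\sum_n u_n z^n$ with $\sum_n \norm{u_n}r^n<\infty$ on a small disc, it sets $u:=\sum_n \abs{u_n}r^n$ (norm-convergent, hence a well-defined element of $E$ by closedness of the positive cone) and concludes $\abs{F(z)}\le u$ from $\abs{\lambda x}=\abs{\lambda}\,\abs{x}$ and the triangle inequality for the modulus; this avoids vector-valued integration entirely, at the cost of being written out only for $d=1$ with the multivariate case left as ``analogous but more technical''. You instead use the polydisc Cauchy formula together with the modulus inequality $\bigabs{\int \psi G\,\ud{\nu}}\le \int\abs{\psi}\,\abs{G}\,\ud{\nu}$ for Bochner integrals against a positive measure --- which is exactly the alternative the paper explicitly attributes to Mattner (for $E=\Ell{1}(\Om)$) in the remark following its proof, here carried out for a general Banach lattice and general $d$. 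The one ingredient you flag as an obstacle is genuine but unproblematic: the inequality holds for simple functions by $\abs{\lambda x}=\abs{\lambda}\abs{x}$ and the triangle inequality, and passes to the limit because the modulus is a norm contraction ($\abs{\,\abs{x}-\abs{y}\,}\le\abs{x-y}$) and the positive cone is norm-closed. Your approach buys a uniform treatment of all dimensions and makes the lattice-theoretic mechanism explicit; the paper's buys brevity and complete elementarity, needing no integration theory beyond scalar convergence of series.
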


\begin{proof}
  We only treat the case $d=1$, the general case being analogous (but
  more technical to write down). By compactness of $K$ it suffices to
  prove that each point $a\in \gebiet$ has a neighborhood $U_a$ such
  that $F(U_a)$ is order-bounded. Without loss of generality, $a=0$.
  Write $F$ as a convergent power series
  \[
    F(z) = \sum_{n=0}^\infty u_n z^n \qquad (\abs{z} < r)
  \]
  for some $r > 0$ and $u_n \in E$. Making $r$ smaller if necessary,
  we have
  \[
    \sum_{n=0}^\infty \norm{u_n} r^n < \infty.
  \]
  The series $u := \sum_{n=0}^\infty \abs{u_n} r^n$ is
  (norm-absolutely) convergent and we obtain for $\abs{z}< r$ that
  \( \abs{F(z)} \le u \), as claimed.
\end{proof}

\noindent Alternatively, one may prove Lemma~\ref{matt.l.order-bounded} by means
of the Cauchy formula, and this is exactly what Mattner does in
\cite{Mattner2001} for $E = \Ell{1}(\Om)$.

\begin{remark}
By adapting the proof, Lemma~\ref{matt.l.order-bounded} can be
generalized from Banach lattices to ordered Banach
spaces with generating positive cone, but we refrain from doing so
here.  The result may be known, but we do not know of
a direct reference, hence we have included the simple proof. Basically, the
argument is present in the proof of \cite[Lemma on p.72]{Stein:LPT},
already mentioned in Remark~\ref{int.r.Stein}, 2). 
\end{remark}

\noindent In the remainder of this section, we show how 
Theorem~\ref{int.t.ifHinf}  can be derived from Theorem
\ref{matt.t.matt}, as long as one supposes in addition
that $\vphi$ is integration against a finite measure. Since the latter is
no restriction, by Remark~\ref{rem:cooper}, this  yields an
alternative proof of Theorem~\ref{int.t.ifHinf} for the case
$p{=}1$. (With a little more effort, one would also obtain a
proof for the case $p \ge 1$.) Although Mattner only treated the
single-variable case, we expect his approach to work also for
functions of several variables.

\begin{proof}[Sketch of an alternate proof of
  Theorem~\ref{int.t.ifHinf} building on \cite{Mattner2001}]
  As already said, our point of departure is as follows: $p=1$, $f$ is
  already known to be joint measurable (Lemma~\ref{matt.l.meas}) and
  $\mu$ is $\sigma$-finite (by the same argument as (1) in our
  original proof). Moreover, \ref{i.int.t.ifHinf.b} holds, i.e., the
  function $z \mapsto \int_\Om F(z)h\,\ud{\mu}$ is holomorphic for all
  $h\in \Ell{\infty}(\Om)$. Finally, $\vphi$ is integration with
  respect to a complex measure $\nu$.

  \prfnoi From \ref{i.int.t.ifHinf.b} and the boundedness of $F$ it follows
  from Theorem~\ref{hol.t.char} that $F$ is holomorphic. Then from
  \ref{i.int.t.ifHinf.b} and Lemma~\ref{hol.l.lin} it follows that
  $D^\alpha F$ is represented by $D^\alpha_zf(z, \cdot)$, and hence
  \ref{i.int.t.ifHinf.a} is proved.

  \prfnoi Next, by Fubini-Tonelli, $f$ is $\abs{\nu}\times \mu$-integrable and
  we can interchange the order of integration. This yields the first
  part of \ref{i.int.t.ifHinf.c} (measurability and integrability of
  $t\mapsto \vphi(f_t)$ as well as the norm estimate) and
  \ref{i.int.t.ifHinf.d}.

  \prfnoi The remaining part of \ref{i.int.t.ifHinf.c} is now proved
  exactly as in the original proof.
\end{proof}

\bigskip
\noindent
{\bf Acknowledgements:} This paper was finalized during a research
stay of the second author at the Dipartimento di Matematica ``Guiseppe
Peano'' of the Universitá di Torino, Italy. Markus Haase is grateful
to this institution for its hospitality and, in particular, to Jörg
Seiler for his kind invitation. 

Both authors thank the anonymous referee for his valuable remarks, in
particular for his references to the works of Grosse-Erdmann and
Mujica, which led to a considerable improvement.

\def\PREPRINT{preprint}
\def\PREPARATION{in preparation}

\def\cprime{$'$}
\providecommand{\bysame}{\leavevmode\hbox to3em{\hrulefill}\thinspace}


\begin{thebibliography}{10}

\bibitem{AliprantisBorderIDA}
Charalambos~D. Aliprantis and Kim~C. Border, \emph{Infinite dimensional
  analysis}, third ed., Springer, Berlin, 2006, A hitchhiker's guide.

\bibitem{Arendt2016}
Wolfgang Arendt, \emph{Vector-valued holomorphic and harmonic functions},
  Concr. Oper. \textbf{3} (2016), 68--76 (English).

\bibitem{ABHN}
Wolfgang Arendt, Charles J.~K. Batty, Matthias Hieber, and Frank Neubrander,
  \emph{Vector-valued {L}aplace transforms and {C}auchy problems}, Monographs
  in Mathematics, vol.~96, Birkh\"auser Verlag, Basel, 2001.

\bibitem{ArendtNikolski2000}
Wolfgang Arendt and Nikolai Nikolski, \emph{Vector-valued holomorphic functions
  revisited}, Math. Z. \textbf{234} (2000), no.~4, 777--805 (English).

\bibitem{Averna1986}
Diego Averna, \emph{On the measurability of functions of several variables},
  Rend. Circ. Mat. Palermo (2) \textbf{35} (1986), no.~1, 22--31.

\bibitem{CooperSSAATFA}
James~B. Cooper, \emph{Saks spaces and applications to functional analysis},
  second ed., North-Holland Mathematics Studies, vol. 139, North-Holland
  Publishing Co., Amsterdam, 1987, Notas de Matem\'{a}tica [Mathematical
  Notes], 116.

\bibitem{DeschHoman}
Wolfgang Desch and Krista~W. Homan, \emph{Pointwise versions of solutions to
  {C}auchy problems in {$L^p$}-spaces}, Rend. Istit. Mat. Univ. Trieste
  \textbf{34} (2002), no.~1-2, 121--142 (2003).

\bibitem{FrerickJordaWengenroth2009}
Leonhard Frerick, Enrique Jord{\'a}, and Jochen Wengenroth, \emph{Extension of
  bounded vector-valued functions}, Math. Nachr. \textbf{282} (2009), no.~5,
  690--696 (English).

\bibitem{Grosse-Erdmann2004}
K.-G. Grosse-Erdmann, \emph{A weak criterion for vector-valued holomorphy},
  Math. Proc. Camb. Philos. Soc. \textbf{136} (2004), no.~2, 399--411
  (English).

\bibitem{Grosse-Erdmann1992}
Karl-Goswin Grosse-Erdmann, \emph{The {Borel}-{Okada} theorem revisited},
  Hagen: Univ. Hagen, FB Math., 1992 (English).

\bibitem{Grothendieck1953}
{A}lexander {G}rothendieck, \emph{Sur certains espaces de fonctions holomorphes
  i, ii}, Journal Reine Angewandte Mathematik \textbf{192} (1953), 35--64 and
  77--95.

\bibitem{HaakHaase:sqfc-arxiv}
Bernhard~H. Haak and Markus Haase, \emph{Square function estimates and
  functional calculus}, \PREPRINT, {\tt https://arxiv.org/abs/1311.0453}.

\bibitem{HaakHaase:sqfc-Asterisque}
\bysame, \emph{The square functional calculus}, \PREPARATION.

\bibitem{HajekJohanisSABS}
Petr H\'{a}jek and Michal Johanis, \emph{Smooth analysis in {B}anach spaces},
  De Gruyter Series in Nonlinear Analysis and Applications, vol.~19, De
  Gruyter, Berlin, 2014.

\bibitem{KatoPTLO}
Tosio Kato, \emph{Perturbation theory for linear operators}, Classics in
  Mathematics, Springer-Verlag, Berlin, 1995, Reprint of the 1980 edition.

\bibitem{KaupKaupHFSCV}
Ludger Kaup and Burchard Kaup, \emph{Holomorphic functions of several
  variables}, De Gruyter Studies in Mathematics, vol.~3, Walter de Gruyter \&
  Co., Berlin, 1983, An introduction to the fundamental theory, With the
  assistance of Gottfried Barthel, Translated from the German by Michael
  Bridgland.

\bibitem{LangRFA}
Serge Lang, \emph{Real and functional analysis}, third ed., Graduate Texts in
  Mathematics, vol. 142, Springer-Verlag, New York, 1993.

\bibitem{Mattner2001}
Lutz Mattner, \emph{Complex differentiation under the integral}, Nieuw Arch.
  Wiskd. (5) \textbf{2} (2001), no.~1, 32--35.

\bibitem{Mujica1991}
Jorge Mujica, \emph{Linearization of bounded holomorphic mappings on {Banach}
  spaces}, Trans. Am. Math. Soc. \textbf{324} (1991), no.~2, 867--887
  (English).

\bibitem{Stein:LPT}
Elias~M. Stein, \emph{Topics in harmonic analysis related to the
  {L}ittlewood-{P}aley theory}, Annals of Mathematics Studies, No. 63,
  Princeton University Press, Princeton, N.J.; University of Tokyo Press,
  Tokyo, 1970.

\bibitem{Stratila-Zsido}
\c{S}erban Str\u{a}til\u{a} and L\'{a}szl\'{o} Zsid\'{o}, \emph{Lectures on von
  {N}eumann algebras}, Editura Academiei, Bucharest; Abacus Press, Tunbridge
  Wells, 1979, Revision of the 1975 original, Translated from the Romanian by
  Silviu Teleman.

\end{thebibliography}
\end{document}